\documentclass[12pt,a4paper]{amsart}
\usepackage{amsmath}
\usepackage{amsfonts}
\usepackage{amssymb}
\usepackage{subfigure}
\usepackage{graphicx}

\addtolength{\oddsidemargin}{-1cm}
\addtolength{\evensidemargin}{-1cm}
\addtolength{\textwidth}{2cm}
\addtolength{\topmargin}{-1cm}
\addtolength{\textheight}{1cm}

\newcommand{\ceil}[1]{\ensuremath{\lceil #1 \rceil}}
\newcommand{\mes}[1]{{|#1|}}
\newcommand{\Rdst}{{\mathbb{R}^d}}
\newcommand{\Rst}{{\mathbb{R}}}

\newcommand{\Rtdst}{{\mathbb{R}^{2d}}}

\newcommand{\LtRd}{{L^2(\Rdst)}}
\newcommand{\LiRd}{{L^1(\Rdst)}}
\newcommand{\LtRtd}{{L^2(\Rtdst)}}
\newcommand{\norm}[1]{\lVert#1\rVert}
\newcommand{\bC}{{\mathbb{C}}}
\newcommand{\set}[2]{\big\{ \, #1 \, :  \, #2 \, \big\}}

\newcommand{\bignorm}[1]{\bigl\lVert#1\bigr\rVert}

\newcommand{\ip}[2]{\ensuremath{\left<#1,#2\right>}}
\newcommand{\sett}[1]{\ensuremath{\left \{ #1 \right \}}}
\newcommand{\abs}[1]{\ensuremath{\left| #1 \right| }}
\newcommand{\bigabs}[1]{\ensuremath{\big| #1 \big| }}
\newcommand{\Bigabs}[1]{\ensuremath{\Big| #1 \Big| }}

\newcommand{\disk}{{\mathbb{D}}}
\newcommand{\border}{\partial}
\newcommand{\error}{E}
\newcommand{\eigenf}{h}
\newcommand{\eigf}{H}

\newcommand{\trace}{\mathrm{trace}}
\newcommand{\mpluso}{\mathcal{M}_\Omega}
\newcommand{\env}{\Theta}
\newcommand{\VR}{{\mathbb{H}}}

\newcommand{\stft}{V}
\newcommand{\perim}[1]{\abs{\border{#1}}}
\newcommand{\inten}{\rho}

\newcommand{\lambdako}{{\lambda_k^\Omega}}
\newcommand{\lambdakro}{{\lambda_k^{R\cdot\Omega}}}

\newcommand{\BV}{\textit{BV}}
\newcommand{\var}{\textit{Var}}
\newcommand{\modstar}{{M^*}}
\newcommand{\modstarsp}{{M^*}}

\newcommand{\locop}{H}
\newcommand{\locom}{{\locop_\Omega}}

\newcommand{\divsymb}{\operatorname{div}}

\newtheorem{theo}{Theorem}[section]
\newtheorem{lemma}[theo]{Lemma}
\newtheorem{coro}[theo]{Corollary}
\newtheorem{prop}[theo]{Proposition}

\newtheorem{rem}[theo]{Remark}
\newtheorem{definition}[theo]{Definition}

\newtheorem*{rep@theorem}{\rep@title}
\newcommand{\newreptheorem}[2]{
\newenvironment{rep#1}[1]{
 \def\rep@title{#2 \ref{##1}}
 \begin{rep@theorem}}
 {\end{rep@theorem}}}

\newreptheorem{theorem}{Theorem}

\title{On accumulated spectrograms}
\author{Lu\'{\i}s Daniel Abreu}
\address{Acoustics Research Institute, Austrian Academy of Science, Wohllebengasse 12-14 A-1040, Vienna
Austria}
\email{daniel@mat.uc.pt}
\author{Karlheinz Gr\"{o}chenig}
\address{Faculty of Mathematics \\ University of Vienna \\ Oskar-Morgenstern-Platz 1 \\ A-1090 Vienna, Austria}
\email{karlheinz.groechenig@univie.ac.at}
\author{Jos\'e Luis Romero}
\email{jose.luis.romero@univie.ac.at}
\date{}
\subjclass[2010]{81S30, 45P05, 94A12,42C25,42C40}
\thanks{L. D. A. was supported by the Austrian Science Fund (FWF) START-project FLAME
("Frames and Linear Operators for Acoustical Modeling and Parameter Estimation") 551-N13.
K.\ G. was  supported in part by the project P26273-N25 and the National Research Network S106 SISE of the
Austrian Science Fund (FWF). J. L. R. gratefully acknowledges support by the project M1586-N25 
of the Austrian Science Fund (FWF)}

\begin{document}
\begin{abstract}
  We study  the eigenvalues and  eigenfunctions of the time-frequency localization operator
  $H_\Omega $  on a domain $\Omega $ of the time-frequency plane.  The
  eigenfunctions are the appropriate prolate spheroidal functions for
  an arbitrary domain $\Omega \subseteq \Rtdst$. Indeed, in analogy to
  the classical theory of Landau-Slepian-Pollak, the number of
  eigenvalues of $H_\Omega $ in $[1-\delta , 1]$ is equal to the
  measure of $\Omega $ up to an error term depending on the perimeter
  of the boundary of $\Omega $. Our main results show that the
  spectrograms of the eigenfunctions corresponding to the large
  eigenvalues (which we call the accumulated spectrogram)  form an approximate partition of
  unity of the given domain $\Omega $. We derive  asymptotic, 
  non-asymptotic, and weak-$L^2$ error estimates for  the accumulated
  spectrogram. As a consequence the domain $\Omega $ can be
  approximated solely from the spectrograms of eigenfunctions without
  information about their phase.  
\end{abstract}
\maketitle

\section{Introduction and results}
\subsection{The time-frequency localization problem}
The short-time Fourier transform of a function $f \in \LtRd$ with respect to a window 
$g \in \LtRd$, $\norm{g}_2=1$, is defined as
\begin{align}
\label{eq_stft}
V_{g}f(z)
=\int_{\mathbb{R}^d}f(t)\overline{g(t-x)}e^{-2\pi i\xi t}dt,
\quad z=(x,\xi) \in \Rdst \times \Rdst.
\end{align}
The number $V_{g}f(x,\xi)$ quantifies the importance of the frequency $\xi$ of $f$ near $x$. The spectrogram
of $f$ is defined as  $\abs{V_g f}^2$ and measures  the distribution of the time-frequency content of $f$. 
The spectrogram is often interpreted as an energy density in time-frequency space. Its size depends on the window $g$.
The usual choice for $g$ is the Gaussian, because it provides optimal resolution in both time and frequency.

The uncertainty principle in Fourier analysis, in its several versions, sets a limit to the possible simultaneous
concentration of a function and its Fourier transform. In terms of the spectrogram, the uncertainty principle can be
roughly recast as follows: if a function $f$ has a spectrogram that
is essentially concentrated inside a region $\Omega \subseteq \Rtdst$, then the
area of $\Omega$ must be at least
1 (see for example the recent survey \cite{rito12}). In fact, if $f \in L^2(\Rdst)$ has norm 1 and
$\int_\Omega \abs{V_g f(z)}^2 dz \geq 1-\varepsilon$, then $\abs{\Omega} \geq 2^d(1-\varepsilon)^2$
\cite[Theorem 3.3.3]{gr01}.
Besides the basic restrictions on its measure, not much is known about the
possible shapes that such a set $\Omega$ can
assume.

In this article we choose a different point of view on the uncertainty
principle. We fix a compact domain $\Omega \subseteq \Rtdst$ in
time-frequency space and then try to determine those functions whose
spectrogram is essentially supported on $\Omega $.  Thus we try to   maximize the
concentration of the spectrogram of a function on a set $\Omega$.
To be precise, let $\Omega \subset \Rtdst$ be a compact set and $g\in
L^2(\mathbb{R}^d)$ a fixed window function. We  consider the following
optimization problem: 
\begin{align}
\label{eq_problem}
\mbox{ Maximize} \int_\Omega \abs{V_g f(z)}^2 \, dz,
\mbox{ with }\norm{f}_2=1.
\end{align}
In analogy to Landau-Pollack-Slepian theory of prolate spheroidal functions \cite{lapo61,lapo62,posl61}, this
problem can be studied through spectral analysis. The relevant operator is known as the 
\emph{time-frequency localization operator} with symbol $\Omega$
\cite{da88, da90} and is defined formally  as
\begin{align}
\label{eq_loc_op}
\locom f(t) = \int_{\Omega} V_{g}f(x,\xi) g(t-x) e^{2\pi i\xi t} dx d\xi,
\qquad t \in \Rdst.
\end{align}
It can be shown that if $\Omega$ is compact, then $\locom$ is a
compact and positive  operator on $L^2(\mathbb{R}^d) $ \cite{bocogr04,cogr03,duwozh01}. Hence $\locom$ can be
diagonalized as
\begin{align}
\label{eq_diag}
\locom f = \sum_{k \geq 1} \lambdako \ip{f}{\eigenf^\Omega_k}\eigenf^\Omega_k, \qquad f \in \LtRd,
\end{align}
where $\sett{\lambdako: k \geq 1}$ are the non-zero eigenvalues of $\locom$ ordered non-increasingly
and $\sett{\eigenf^\Omega_k: k \geq 1}$ is the corresponding orthonormal set of eigenfunctions.
(The functions $\eigenf^\Omega_k$ and the eigenvalues $\lambdako$ depend on the choice of the window $g$, but we do not
make this dependence explicit in the notation.)

The reason why $\locom$ is useful for studying the optimization problem \eqref{eq_problem} is that
\begin{align*}
\ip{\locom f}{f}=
\int_{\Omega} V_{g}f(x,\xi) \ip{g(\cdot-x) e^{2\pi i\xi \cdot}}{f} dx d\xi
=\int_{\Omega} \abs{V_g f(x,\xi)}^2 dxd\xi.
\end{align*}
Consequently, the first eigenfunction $\eigenf^\Omega_1$ of $\locom$
solves \eqref{eq_problem}:
\begin{align*}
\lambda^\Omega_1 = \ip{\locom \eigenf^\Omega_1}{\eigenf^\Omega_1}=
\int_{\Omega} \abs{V_g \eigenf^\Omega_1(z)}^2 dz=
\max \left\{\int_\Omega\abs{V_g f(z)}^2 dz: \norm{f}_2=1\right\}.
\end{align*}
If the set $\Omega$ is small, we expect $\lambda^\Omega_1=\int_{\Omega} \abs{V_g \eigenf^\Omega_1}^2$ to be small
because, as a consequence of the uncertainty principle, no spectrogram fits inside $\Omega$.
On the other hand, if $\Omega$ is big we expect \eqref{eq_problem} to have a number of approximate solutions, since a
number of spectrograms may fit inside $\Omega$. This intuition is made precise by studying the distribution of
eigenvalues of $\locom$. The min-max lemma for self-adjoint operators asserts that
\begin{align}
\label{eq_minimax}
\lambda^\Omega_k =
\int_{\Omega} \abs{V_g \eigenf^\Omega_k(z)}^2 dz=
\max \left\{\int_\Omega \abs{V_g f(z)}^2 dz: \norm{f}_2=1, f \perp \eigenf^\Omega_1, \ldots, \eigenf^\Omega_{k-1}
\right\}.
\end{align}
Hence, the profile of the eigenvalues of $\Omega$ shows how many orthogonal functions have a spectrogram
well-concentrated on $\Omega$. The standard asymptotic distribution  for the eigenvalues of $\locom$ involves dilating
a fixed set $\Omega$ and reads as follows.
\begin{prop}
\label{prop_eigen_soft}
Let $g \in L^2(\Rdst)$, $\norm{g}_2=1$, and
let $\Omega \subset \Rtdst$ be a compact set. Then for each $\delta \in (0,1)$,
\begin{align}
\label{eq_asympt}
\frac{\#\sett{k:\lambdakro > 1-\delta}}{\mes{R \cdot \Omega}} \longrightarrow 1,
\mbox{ as }R \longrightarrow +\infty.
\end{align}
\end{prop}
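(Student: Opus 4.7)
I would approach Proposition \ref{prop_eigen_soft} by comparing the first two spectral moments of $\locop_{R\cdot\Omega}$, i.e.\ its trace and its Hilbert--Schmidt norm squared. Writing the localization operator as a weak integral of rank-one projections, $\locop_{R\cdot\Omega}=\int_{R\cdot\Omega}(M_\xi T_x g)\otimes(M_\xi T_x g)\,dxd\xi$ (consistent with \eqref{eq_loc_op}), and exploiting the identity $\abs{\ip{M_\xi T_x g}{M_{\xi'}T_{x'}g}}=\abs{V_g g(z-z')}$ for $z=(x,\xi)$, $z'=(x',\xi')$, one reads off
\begin{align*}
\trace(\locop_{R\cdot\Omega})=\mes{R\cdot\Omega},\qquad \trace(\locop_{R\cdot\Omega}^2)=\int_{R\cdot\Omega}\int_{R\cdot\Omega}\abs{V_g g(z-w)}^2\,dz\,dw.
\end{align*}

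The substantive step is to show $\trace(\locop_{R\cdot\Omega}^2)/\mes{R\cdot\Omega}\to 1$. The substitution $u=z-w$ and the scaling $\mes{R\cdot\Omega}=R^{2d}\mes{\Omega}$ reduce the ratio to
\begin{align*}
\int_{\Rtdst}\abs{V_g g(u)}^2\,\frac{\mes{\Omega\cap(\Omega-u/R)}}{\mes{\Omega}}\,du.
\end{align*}
The integrand tends pointwise to $\abs{V_g g(u)}^2$ by continuity of Lebesgue measure under translation, and is dominated by $\abs{V_g g(u)}^2\in L^1(\Rtdst)$, whose total mass equals $\norm{g}_2^4=1$ by the Moyal orthogonality relation. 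Dominated convergence then gives the limit $1$. This is the only analytic ingredient of the proof and needs nothing more than boundedness and positive measure of $\Omega$.

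Once the trace asymptotics $\sum_k \lambdakro(1-\lambdakro)=\trace(\locop_{R\cdot\Omega})-\trace(\locop_{R\cdot\Omega}^2)=o(\mes{R\cdot\Omega})$ are in hand, the conclusion is purely algebraic, driven by $\lambdakro\in[0,1]$ (which follows from \eqref{eq_minimax}). Setting $N_\delta:=\#\sett{k:\lambdakro>1-\delta}$, I split $\sum_k\lambdakro=\mes{R\cdot\Omega}$ according to whether $\lambdakro>1-\delta$ or not; on the complement the estimate $\lambdakro\le\delta^{-1}\lambdakro(1-\lambdakro)$ yields the lower bound $N_\delta\ge\mes{R\cdot\Omega}-\delta^{-1}\sum_k\lambdakro(1-\lambdakro)$. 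The matching upper bound comes from
\begin{align*}
N_\delta=\sum_{\lambdakro>1-\delta}\lambdakro+\sum_{\lambdakro>1-\delta}(1-\lambdakro)\le\mes{R\cdot\Omega}+(1-\delta)^{-1}\sum_k\lambdakro(1-\lambdakro),
\end{align*}
and both error terms are $o(\mes{R\cdot\Omega})$ by the preceding step. Dividing through by $\mes{R\cdot\Omega}$ yields \eqref{eq_asympt}. The main obstacle in the argument is genuinely the dominated-convergence step on $\trace(\locop_{R\cdot\Omega}^2)$; everything else is formal manipulation of eigenvalues in $[0,1]$, and no regularity of the boundary of $\Omega$ is needed for this soft statement (sharper rates will presumably enter only in the later, quantitative theorems).
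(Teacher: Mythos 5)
Your proof is correct and follows essentially the same route the paper takes: the first two spectral moments $\trace(\locom)=\mes{\Omega}$ and $\trace(\locom^2)=\int_\Omega\int_\Omega\abs{V_gg(z-z')}^2\,dz\,dz'$ are exactly the content of Lemma \ref{lemma_toep}, and your two eigenvalue inequalities are precisely the bound $\abs{G(t)}\leq\max\{\delta^{-1},(1-\delta)^{-1}\}(t-t^2)$ used in Lemma \ref{lemma_trace_norm}. The paper itself defers the soft statement to \cite{feno01} and only carries out the quantitative version (Lemma \ref{lemma_var} and Proposition \ref{prop_eigen}, requiring finite perimeter); your dominated-convergence argument for $\trace(\locop_{R\cdot\Omega}^2)/\mes{R\cdot\Omega}\to 1$ is the correct soft substitute and needs only $0<\mes{\Omega}<\infty$.
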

Proposition \ref{prop_eigen_soft} was proved with additional assumptions on the boundary of $\Omega$ by Ramanathan and
Topiwala \cite{rato94} and in full generality by Feichtinger and Nowak \cite{feno01}. For sets with smooth
boundary, more refined asymptotics are available \cite{defeno02, feno01, herato94, herato97, rato94} (see also
Section
\ref{sec_eigenvalues}). These results parallel the fundamental results for Fourier multipliers (ideal low-pass filters)
by Landau, Pollak and Slepian
\cite{la67-1,la67,la75-1,lapo61,lapo62,lawi80,posl61}.  Indeed, the
eigenfunctions of $\locom $ are the proper analogue of the prolate
spheroidal functions  associated to a general domain $\Omega $ in
phase space. 

See Figure
\ref{fig_eigenvals} for a numerical example for
Proposition \ref{prop_eigen_soft} for a star-shaped domain.

\begin{figure}
\centering
\subfigure[A domain with the shape of a star and area $\approx$ 23.]{
\includegraphics[scale=0.8]{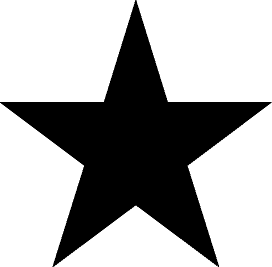}
\label{fig_star}
}
\subfigure[A plot of the eigenvalues illustrating Proposition 
\ref{prop_eigen_soft}.
Note that the star domain has considerable perimeter in relation to its area]{
\includegraphics[scale=0.4]{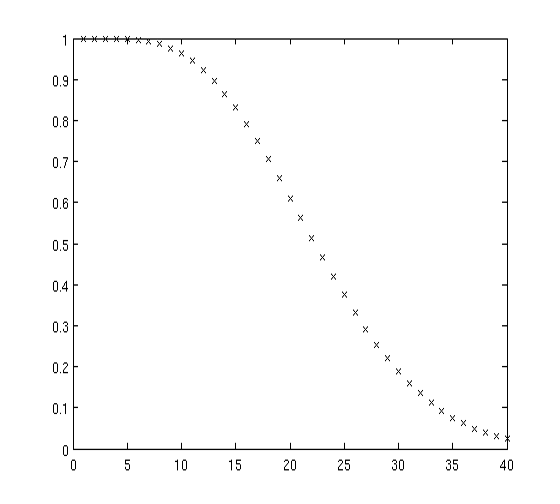}
}
\caption{The eigenvalues of a time-frequency localization operator with Gaussian window.}
\label{fig_eigenvals}
\end{figure}
\subsection{Accumulation of spectrograms}
The asymptotic behavior of the eigenvalue distribution  in
\eqref{eq_asympt} implies  that, after sufficiently dilating $\Omega$,
the $L^2$ concentration $\int_\Omega \abs{V_g h^\Omega_k(z)}^2 \, dz$ is close to 1 for $1 \leq k \leq n \approx
\mes{\Omega}$
and decays for large $k$. The purpose of this article is to refine the description of the time-frequency localization
of the eigenfunctions $h^\Omega_k$. We will show that the corresponding spectrograms 
approximately form a partition of unity on $\Omega$.

More precisely, we consider the following function, which we call the \emph{accumulated spectrogram}.
\begin{definition}
For a compact set $\Omega\subseteq \Rtdst$ and a window function $g \in \LtRd$ we let
$A_\Omega:=\ceil{\mes{\Omega}}$ be the smallest integer greater than or equal to $\mes{\Omega}$ and
$\sett{h_k^\Omega: k \geq 1}$ be the set of normalized eigenfunctions of $\locom$ ordered non-increasingly
with respect to the corresponding eigenvalues. The accumulated spectrogram of $\Omega$ 
(with respect to $g$) is
\begin{align*}
&\inten_\Omega(z) := \sum_{k=1}^{A_\Omega} \abs{V_g
  h^\Omega_k(z)}^2,\qquad z\in \Rtdst \, .
\end{align*}
\end{definition}
Our goal is to prove that $\inten_\Omega$ looks approximately like $1_\Omega$
(the characteristic function of $\Omega$). Since $0 \leq \abs{V_g h^\Omega_k(z)}^2 \leq 1$,
this means that the spectrograms
$\abs{V_g h^\Omega_1}^2, \ldots, \abs{V_g h^\Omega_{A_\Omega}}^2$ form an approximate partition of unity on $\Omega$.
Indeed, numerical experiments show that
$\inten_\Omega$ resembles a bump function on $\Omega$ plus a tail around its boundary (see Figure
\ref{fig_inten}).
The size of this tail grows when $\Omega$ grows but at a smaller rate than $\mes{\Omega}$. Our main results
will validate these observations. 

\begin{figure}
\label{fig_intens}
\centering
\subfigure[The accumulated spectrogram plotted over the domain.]{
\includegraphics[scale=0.4]{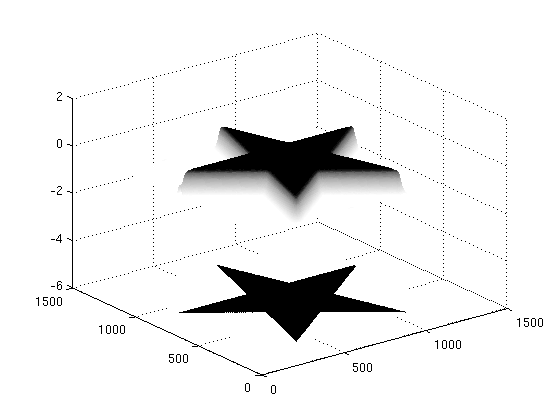}
\label{fig_starfl}
}
\subfigure[A cross-section of the domain and the accumulated spectrogram.]{
\includegraphics[scale=0.4]{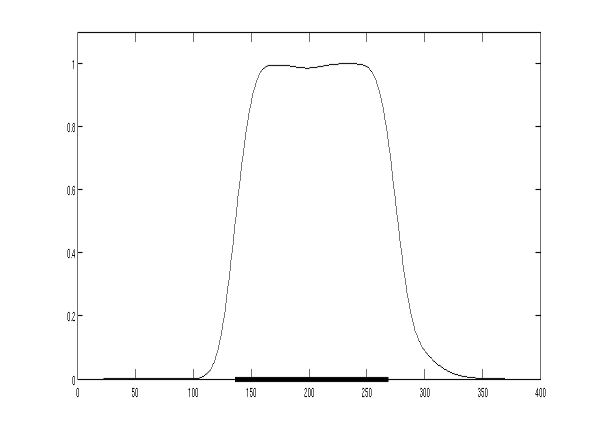}
\label{fig_cross}
}
\caption{A domain with the form of a star.}
\label{fig_inten}
\end{figure}

To argue that the tail in $\inten_\Omega$ is asymptotically smaller than $\mes{\Omega}$ we consider dilations
of a fixed set $\Omega$ and rescale $\inten_{R\cdot \Omega}$ by a factor of $R$. We then have the
following result.
\begin{theo}
\label{th_one_point}
Let $g \in L^2(\Rdst)$, $\norm{g}_2=1$, and let $\Omega \subset \Rtdst$ be compact. Then
\begin{align*}
\inten_{R\cdot\Omega} (R \cdot) \longrightarrow 1_\Omega
\mbox{ in $L^1(\Rtdst)$, as }R\longrightarrow +\infty.
\end{align*}
\end{theo}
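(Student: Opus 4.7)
The plan is to compare $\inten_\Omega$ with a more tractable auxiliary quantity
\[
\sigma_\Omega(z):=\sum_{k\ge 1}\lambdako\,\abs{V_g h^\Omega_k(z)}^2 = \ip{\locom \pi(z)g}{\pi(z)g},
\]
where $\pi(z)g(t):=e^{2\pi i \xi t}g(t-x)$ denotes the time-frequency shift and $z=(x,\xi)$. First I would derive the convolution identity $\sigma_\Omega = 1_\Omega*\abs{V_g g}^2$. This follows from the variational formula $\ip{\locom f}{f}=\int_\Omega\abs{V_g f}^2$, the covariance $\abs{V_g(\pi(z)g)(w)}=\abs{V_g g(w-z)}$, and the symmetry $\abs{V_g g(-u)}=\abs{V_g g(u)}$. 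Since $\int\abs{V_g g}^2=1$ by the STFT orthogonality relations, a change of variables yields $\sigma_{R\cdot\Omega}(Rz)=(1_\Omega*\phi_R)(z)$, where $\phi_R(v):=R^{2d}\abs{V_g g}^2(Rv)$ is a standard approximate identity. Therefore $\sigma_{R\cdot\Omega}(R\,\cdot\,)\to 1_\Omega$ in $L^1(\Rtdst)$.

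Next I would estimate $\norm{\inten_\Omega-\sigma_\Omega}_1$. Both functions are nonnegative linear combinations of the unit-$L^1$-norm spectrograms $\abs{V_g h^\Omega_k}^2$, with coefficients $1_{\{k\le A_\Omega\}}$ and $\lambdako$ respectively, so the triangle inequality gives
\[
\norm{\inten_\Omega-\sigma_\Omega}_1 \le \sum_{k\le A_\Omega}(1-\lambdako)+\sum_{k>A_\Omega}\lambdako.
\]
The trace identity $\sum_k\lambdako=\mes{\Omega}$ (the trace of $\locom$ equals the integral of its symbol) rewrites the tail as $\sum_{k>A_\Omega}\lambdako=\sum_{k\le A_\Omega}(1-\lambdako)+\mes{\Omega}-A_\Omega$, with $A_\Omega-\mes{\Omega}\in[0,1)$, so the entire bound reduces to $\sum_{k\le A_\Omega}(1-\lambdako)$ up to a bounded additive term.

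The main obstacle is showing that after substituting $\Omega\leadsto R\cdot\Omega$ this remaining sum is $o(R^{2d})$. Here I would invoke Proposition \ref{prop_eigen_soft}: setting $N_\delta(R):=\#\sett{k:\lambdakro>1-\delta}$, the proposition states $N_\delta(R)/\mes{R\cdot\Omega}\to 1$, which together with $A_{R\cdot\Omega}=R^{2d}\mes{\Omega}+O(1)$ gives $\abs{A_{R\cdot\Omega}-N_\delta(R)}=o(R^{2d})$ for each fixed $\delta>0$. Splitting at the threshold $1-\delta$,
\[
\sum_{k\le A_{R\cdot\Omega}}(1-\lambdakro)\le \delta\,A_{R\cdot\Omega}+\bigl(A_{R\cdot\Omega}-N_\delta(R)\bigr)_+\le \delta R^{2d}\mes{\Omega}+o(R^{2d}),
\]
so dividing by $R^{2d}$ and letting $\delta\to 0^+$ yields $R^{-2d}\norm{\inten_{R\cdot\Omega}-\sigma_{R\cdot\Omega}}_1\to 0$. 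Since $\norm{f(R\,\cdot\,)}_1=R^{-2d}\norm{f}_1$, this is exactly $\norm{\inten_{R\cdot\Omega}(R\,\cdot\,)-\sigma_{R\cdot\Omega}(R\,\cdot\,)}_1\to 0$, which together with the convergence of $\sigma_{R\cdot\Omega}(R\,\cdot\,)$ established in the first paragraph gives the theorem.
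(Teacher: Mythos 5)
Your proposal is correct and follows essentially the same route as the paper: your auxiliary function $\sigma_\Omega$ is exactly the paper's $1_\Omega * \abs{V_g g}^2$ from Lemma \ref{lemma_tko} (your derivation via $\ip{\locom \pi(z)g}{\pi(z)g}$ is the reproducing-kernel computation in different notation), your eigenvalue-sum bound is Lemma \ref{lemma_bound_eig}, and your $o(R^{2d})$ estimate for $\sum_{k\le A_{R\cdot\Omega}}(1-\lambdakro)$ via Proposition \ref{prop_eigen_soft} is Lemma \ref{lemma_error}(a). The concluding rescaling and approximate-identity step for $1_\Omega * \env_R \to 1_\Omega$ is likewise identical to the paper's argument.
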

Under mild regularity assumptions on $g$ and $\Omega$ we give a more
quantitative and  
non-asymptotic estimate that explains the tail in
$\inten_\Omega$ as an effect of the window $g$. To quantify the statement in
Theorem \ref{th_one_point}, we assume that
$g$ satisfies the following time-frequency concentration condition:
\begin{align}
\label{eq_modstar}
\norm{g}_\modstar^2 := \int_\Rtdst \abs{z} \abs{V_g g(z)}^2 dz < +\infty.
\end{align}
We let $\modstarsp(\Rdst)$ denote the class of all $L^2(\Rdst)$ functions satisfying \eqref{eq_modstar}.
It follows easily that the Schwartz class is contained in $\modstarsp(\Rdst)$.
(The class $\modstarsp$ is closely related to the \emph{modulation spaces}
$M^1_{1}(\Rdst)$ and $M^2_{1/2}(\Rdst)$.  See Remark~\ref{rem_mod}).

In addition, we assume that $\Omega$ has \emph{finite perimeter}. This means that its characteristic function $1_\Omega$
is of bounded variation.  Every  compact set $\Omega
\subseteq \Rtdst$ with smooth boundary has a  finite
perimeter and its perimeter is the $(2d-1)$-dimensional surface measure of its topological boundary.
(Section \ref{sec_bv} below provides more  background on functions of bounded variation and sets of finite perimeter.)
\begin{theo}
\label{th_non_asym}
Assume that $g \in \modstarsp(\Rdst)$ with $\norm{g}_2=1$
and that $\Omega \subset \Rtdst$ is a compact set with finite perimeter.
Then
\begin{align*}
\frac{1}{\mes{\Omega}} 
\bignorm{\inten_{\Omega}- 1_{\Omega}*\abs{V_g g}^2}_1 \leq 
\left(
\frac{1}{\mes{\Omega}}+4\norm{g}_\modstar \sqrt{\frac{\perim{\Omega}}{\mes{\Omega}}}
\right),
\end{align*}
where $\perim{\Omega}$ is the perimeter of $\Omega$.
\end{theo}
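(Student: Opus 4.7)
The plan is to re-express $1_\Omega * \abs{\stft_g g}^2$ as a spectral average, reducing the LHS to a sum of eigenvalue deviations, and then to estimate this sum using the trace $\trace(\locom - \locom^2)$ together with the perimeter of $\Omega$.

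First, using the time--frequency covariance $\abs{\stft_g(\pi(z)g)(w)} = \abs{\stft_g g(w-z)}$ (with $\pi(z)g$ the shifted window), one computes
\[
(1_\Omega * \abs{\stft_g g}^2)(z) = \int_\Omega \abs{\stft_g(\pi(z)g)(w)}^2\,dw = \ip{\locom \pi(z) g}{\pi(z)g},
\]
so inserting the spectral decomposition \eqref{eq_diag} of $\locom$ gives $1_\Omega * \abs{\stft_g g}^2(z) = \sum_k \lambdako \abs{\stft_g \eigenf^\Omega_k(z)}^2$. Subtracting from $\inten_\Omega$, applying the triangle inequality together with $\int \abs{\stft_g \eigenf^\Omega_k}^2\,dz = 1$, and then simplifying with the trace identity $\sum_k \lambdako = \mes{\Omega}$ and $A_\Omega - \mes{\Omega}\in [0,1)$,
\[
\bignorm{\inten_\Omega - 1_\Omega * \abs{\stft_g g}^2}_1 \leq \sum_k \bigabs{1_{k\leq A_\Omega} - \lambdako} = (A_\Omega - \mes{\Omega}) + 2\sum_{k>A_\Omega}\lambdako \leq 1 + 2\sum_{k>A_\Omega}\lambdako.
\]

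The tail $\sum_{k>A_\Omega}\lambdako$ is then controlled through the eigenvalue dispersion $E := \sum_k \lambdako(1-\lambdako) = \trace(\locom - \locom^2)$. A standard kernel computation produces
\[
E = \int_\Omega \int_{\Omega^c}\abs{\stft_g g(z-w)}^2\,dw\,dz = \int \abs{\stft_g g(u)}^2 \bigabs{\Omega \setminus (\Omega-u)}\,du,
\]
after which the BV bound $\norm{1_\Omega(\cdot+u) - 1_\Omega}_1 \leq \abs{u}\perim{\Omega}$ gives $\bigabs{\Omega \setminus (\Omega-u)} \leq \abs{u}\perim{\Omega}/2$, whence $E \leq \perim{\Omega}\norm{g}_\modstar^2/2$. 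To promote this to the stated square--root form I apply Cauchy--Schwarz: writing $\lambdako = \sqrt{\lambdako(1-\lambdako)}\cdot\sqrt{\lambdako/(1-\lambdako)}$ and using $\lambdako/(1-\lambdako)\leq 2\lambdako$ for $\lambdako \leq 1/2$,
\[
\sum_{k>A_\Omega}\lambdako \leq \sqrt{E}\cdot\sqrt{\sum_{k>A_\Omega}2\lambdako} \leq \sqrt{2\,\mes{\Omega}\,E}.
\]
The opposite scenario $\lambda^\Omega_{A_\Omega+1} > 1/2$ forces $\lambdako \geq 1/2$ for every $k \leq A_\Omega$ by monotonicity, and then the symmetric inequality $\sum_{k\leq A_\Omega}(1-\lambdako) \leq 2E$ combined with $\sum_k \lambdako = \mes{\Omega}$ again bounds the tail by $2E$, still compatible with the stated estimate. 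Inserting $E \leq \perim{\Omega}\norm{g}_\modstar^2/2$ and dividing by $\mes{\Omega}$ delivers the theorem.

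The main technical obstacle is this tail bound: a direct Cauchy--Schwarz on $\sum_{k>A_\Omega}\lambdako$ fails because the number of nonzero eigenvalues need not be finite. The device of factoring through $\sqrt{\lambdako(1-\lambdako)}$, together with the monotonicity-based case split at $\lambdako = 1/2$, is what converts the \emph{linear} perimeter estimate $E \leq \perim{\Omega}\norm{g}_\modstar^2/2$ into the correct $\sqrt{\perim{\Omega}\,\mes{\Omega}}$ scaling; everything else reduces to standard BV theory and basic Hilbert-space computation.
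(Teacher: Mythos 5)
Your argument follows the same skeleton as the paper's: the identity $1_\Omega*\abs{V_gg}^2(z)=\sum_k\lambdako\abs{V_g h^\Omega_k(z)}^2$ (the paper's Lemma \ref{lemma_tko}, obtained there via the reproducing kernel $K_z$, which is just $V_g(\pi(z)g)$ up to a phase), the reduction of the $L^1$-error to $1+2\sum_{k>A_\Omega}\lambdako$ (Lemma \ref{lemma_bound_eig}), and the control of $\trace(M_\Omega-M_\Omega^2)$ by $\perim{\Omega}\norm{g}_\modstar^2$ via the trace formulas and the BV translation estimate. Where you genuinely diverge is in how the square root is extracted from the linear perimeter bound: the paper passes through the eigenvalue counting function $\#\{k:\lambdako>1-\delta\}$ (Proposition \ref{prop_eigen}) and then optimizes the free threshold $\delta:=\norm{g}_\modstar\sqrt{\perim{\Omega}/\mes{\Omega}}$, whereas you apply Cauchy--Schwarz with the weight $\sqrt{\lambdako(1-\lambdako)}$ directly to the tail. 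Your route is shorter and bypasses Proposition \ref{prop_eigen} entirely (and even yields the better constant $2$ in place of $4$); the paper's route has the advantage that the counting estimate it establishes is reused for the weak-$L^2$ Theorem \ref{th_weak_l2}.

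There is one step you have not actually closed: in the case $\lambda^\Omega_{A_\Omega+1}>1/2$ you bound the tail by $2E\leq\perim{\Omega}\norm{g}_\modstar^2$ and assert this is ``still compatible with the stated estimate.'' It is not, as written: $\perim{\Omega}\norm{g}_\modstar^2\leq 2\norm{g}_\modstar\sqrt{\perim{\Omega}\mes{\Omega}}$ only when $\norm{g}_\modstar^2\perim{\Omega}\leq 4\mes{\Omega}$, and nothing rules out the opposite regime in this case. The repair is one line: you also have the trivial bound $\sum_{k>A_\Omega}\lambdako\leq\sum_k\lambdako=\mes{\Omega}$, so in this case
\begin{align*}
\sum_{k>A_\Omega}\lambdako\leq\min\{2E,\mes{\Omega}\}\leq\sqrt{2E\,\mes{\Omega}}\leq\norm{g}_\modstar\sqrt{\perim{\Omega}\,\mes{\Omega}},
\end{align*}
which matches the bound from the first case. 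With that addition the proof is correct.
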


For $\mes{\Omega} \geq 1$, Theorem \ref{th_non_asym} implies the  weaker estimate
\begin{align}
\label{eq_C}
\frac{1}{\mes{\Omega}}
\bignorm{\inten_{\Omega}- 1_{\Omega}}_1 \leq C
\sqrt{\frac{\perim{\Omega}}{\mes{\Omega}}},
\end{align}
where the constant $C$ depends only on the window $g$ (see Corollary \ref{coro_error}).
Since  $\norm{\inten_{\Omega}}_1 = A_\Omega = \mes{\Omega} + O(1)$
and $\norm{1_{\Omega}}_1=\mes{\Omega}$,
the $L^1$-difference between $\inten_{\Omega}$ and $1_\Omega$ is much
smaller than the norm of these two functions.

While \eqref{eq_C} bounds the  $L^1$-error (normalized over $\Omega $)
incurred when approximating $\inten_\Omega$ by
$1_\Omega$, it is even more  interesting to obtain pointwise error
estimates. Applying Chebyshev's Inequality to~\eqref{eq_C}, we obtain
that 
$$
\Bigabs{\sett{ z\in \mathbb{R}^{2d} : \bigabs{\inten_\Omega (z)
      -1_\Omega(z) }>\delta}}
\lesssim \frac{ \big( \perim{\Omega} |\Omega | \big)^{1/2} }{\delta}
\, .
$$
As our final result 
we will prove the following refined weak-$L^2$ error estimate.

\begin{theo}
\label{th_weak_l2}
Let $g \in \modstarsp(\Rdst)$ and let $\Omega \subset \Rtdst$ be a compact set with finite perimeter
and assume that $\norm{g}^2_\modstarsp \perim{\Omega} \geq 1$. Then
\begin{align*}
\bigabs{\sett{ z\in \mathbb{R}^{2d} : \bigabs{\inten_\Omega (z)
      -1_\Omega(z) }>\delta}}
\lesssim \frac{1}{\delta^2}\norm{g}_{\modstar}^2 \perim{\Omega},
\qquad \delta>0.
\end{align*}
\end{theo}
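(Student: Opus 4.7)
The plan is to combine a sharp $L^1$-bound on $\inten_\Omega - 1_\Omega$ with the uniform pointwise bound $0 \leq \inten_\Omega \leq 1$, and then apply Chebyshev's inequality in $L^2$. The pointwise bound is immediate from writing $\inten_\Omega(z) = \norm{P\pi(z)g}_2^2$, where $P$ is the orthogonal projection onto the span of $\eigenf^\Omega_1,\ldots,\eigenf^\Omega_{A_\Omega}$ and $\pi(z)$ is the time-frequency shift: since $\norm{\pi(z)g}_2 = 1$ we get $\inten_\Omega(z) \leq 1$, and together with $1_\Omega \in \{0,1\}$ this yields $\abs{\inten_\Omega(z) - 1_\Omega(z)} \leq 1$ everywhere. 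Consequently,
\[
\norm{\inten_\Omega - 1_\Omega}_2^2 \leq \norm{\inten_\Omega - 1_\Omega}_1, \qquad \bigabs{\set{z}{\abs{\inten_\Omega(z) - 1_\Omega(z)} > \delta}} \leq \delta^{-2}\norm{\inten_\Omega - 1_\Omega}_1,
\]
so the theorem reduces to the sharper $L^1$-bound $\norm{\inten_\Omega - 1_\Omega}_1 \lesssim \norm{g}_\modstar^2 \perim{\Omega}$. Note that a bare triangle-inequality argument starting from Theorem~\ref{th_non_asym} leaves behind a spurious $\sqrt{\mes{\Omega}}$ factor, so a separate argument is required.

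For the $L^1$-bound I use $\int\inten_\Omega = A_\Omega$, $\int_\Omega\inten_\Omega = \sum_{k\leq A_\Omega}\lambdako$, and $\trace(\locom) = \sum_k\lambdako = \mes{\Omega}$ to get
\[
\norm{\inten_\Omega - 1_\Omega}_1 = \int_\Omega(1-\inten_\Omega) + \int_{\Omega^c}\inten_\Omega = 2\Big(A_\Omega - \sum_{k\leq A_\Omega}\lambdako\Big) = 2(A_\Omega - \mes{\Omega}) + 2\sum_{k > A_\Omega}\lambdako.
\]
The first summand is at most $2$. To control the tail $\sum_{k > A_\Omega}\lambdako$, set $M := \#\{k \,:\, \lambdako \geq 1/2\}$ and split: indices with $\lambdako < 1/2$ satisfy $\lambdako \leq 2\lambdako(1-\lambdako)$ and contribute at most $2\sum_k\lambdako(1-\lambdako)$, while indices $k > A_\Omega$ with $\lambdako \geq 1/2$ number at most $(M - A_\Omega)_+$ and each contribute at most $1$. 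Writing $M - \mes{\Omega} = \sum_{\lambdako\geq 1/2}(1-\lambdako) - \sum_{\lambdako < 1/2}\lambdako$ and applying $\lambdako \leq 2\lambdako(1-\lambdako)$ to the second sum together with $(1-\lambdako) \leq 2\lambdako(1-\lambdako)$ to the first gives $\bigabs{M - \mes{\Omega}} \leq 2\sum_k\lambdako(1-\lambdako)$, and hence $(M - A_\Omega)_+ \leq 1 + 2\sum_k\lambdako(1-\lambdako)$.

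It remains to bound $\sum_k\lambdako(1-\lambdako)$ by $\norm{g}_\modstar^2\perim{\Omega}$ up to a constant, a perimeter-type estimate that should appear in Section~\ref{sec_eigenvalues}. It follows from the trace identity
\[
\sum_k\lambdako(1-\lambdako) = \trace(\locom - \locom^2) = \mes{\Omega} - \int_\Omega (1_\Omega \ast \abs{V_g g}^2)\,dz = \tfrac12\bignorm{1_\Omega - 1_\Omega \ast \abs{V_g g}^2}_1,
\]
combined with the standard $BV$-estimate $\bignorm{1_\Omega - 1_\Omega \ast \phi}_1 \leq \perim{\Omega}\int\abs{w}\phi(w)\,dw$ applied to $\phi = \abs{V_g g}^2$ (for which $\int\abs{w}\abs{V_g g(w)}^2\,dw = \norm{g}_\modstar^2$). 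Assembling all of the above and absorbing the $O(1)$ terms via the hypothesis $\norm{g}_\modstar^2\perim{\Omega} \geq 1$ produces the required $L^1$-bound, and the Chebyshev step from the first paragraph completes the proof. The main technical subtlety is the case analysis around the cutoff $A_\Omega$, which cannot be avoided because the eigenvalues $\lambdako$ transition through $1/2$ over a range of indices rather than jumping at $k = A_\Omega$; the argument quantifies this transition by controlling $\bigabs{M - A_\Omega}$ in terms of $\sum_k\lambdako(1-\lambdako)$.
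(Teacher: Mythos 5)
Your argument is correct, and it reaches the theorem by a genuinely different route than the paper. The paper first proves Proposition \ref{prop_weak_l2}: for each $\delta$ it splits the eigenvalue indices into the plunge region $\{k:\delta<\lambdako\le 1-\delta\}$, whose cardinality is $\lesssim \delta^{-1}\norm{g}_\modstar^2\perim{\Omega}$ by Proposition \ref{prop_eigen}, and its complement, where the pointwise contribution to $\inten_\Omega-1_\Omega*\env$ is at most $2\delta$ by Lemma \ref{lemma_tko} and \eqref{eq_sum_one}; an $L^1$-Chebyshev bound on the plunge-region sum then gives the weak estimate for $\inten_\Omega-1_\Omega*\env$, and the convolution error is added afterwards via Lemma \ref{lemma_var}. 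You instead establish a single $\delta$-independent bound $\norm{\inten_\Omega-1_\Omega}_1\lesssim\norm{g}_\modstar^2\perim{\Omega}$ and combine it with $\norm{\inten_\Omega-1_\Omega}_\infty\le 1$ and $L^2$-Chebyshev. Both proofs ultimately rest on the same ingredients (the trace identities of Lemma \ref{lemma_toep} and the $\BV$ estimate of Lemma \ref{lemma_var}), but your assembly uses one fixed threshold $1/2$ instead of a $\delta$-family, and your intermediate $L^1$ bound is \emph{linear} in the perimeter — strictly sharper than Corollary \ref{coro_error}, whose bound is $(\perim{\Omega}\,\mes{\Omega})^{1/2}$, whenever $\perim{\Omega}\ll\mes{\Omega}$, and of the expected optimal order (an error of size $O(1)$ on a neighborhood of $\partial\Omega$, as the Ginibre example confirms). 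Your counting argument for $(M-A_\Omega)_+$ via the exact identity $M-\mes{\Omega}=\sum_{\lambdako\ge 1/2}(1-\lambdako)-\sum_{\lambdako<1/2}\lambdako$ is the step that replaces the paper's plunge-region machinery, and it checks out. One cosmetic slip: your first display should be the identity $\norm{\inten_\Omega-1_\Omega}_1=(A_\Omega-\mes{\Omega})+2\sum_{k>A_\Omega}\lambdako$ rather than $2(A_\Omega-\mes{\Omega})+2\sum_{k>A_\Omega}\lambdako$ (you counted $A_\Omega-\mes{\Omega}$ twice), but since this only inflates the upper bound by at most $1$ it is harmless; also, the exact identity $\sum_k\lambdako(1-\lambdako)=\tfrac12\norm{1_\Omega-1_\Omega*\env}_1$ is true but more than you need — the one-sided inequality $\int_\Omega(1-1_\Omega*\env)\le\norm{1_\Omega-1_\Omega*\env}_1$, as used in Proposition \ref{prop_eigen}, already suffices.
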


Theorem~\ref{th_weak_l2} says that the size of the set where $\rho
_\Omega $ differs significantly from $1_\Omega $ is of order
$\perim{\Omega }$. This is the expected order: along the boundary
$\partial \Omega $ the characteristic function $1_\Omega $  has a jump
singularity, whereas the accumulated spectrogram $\rho _\Omega $ is
smooth (at least uniformly continuous). Therefore $1_\Omega - \rho _\Omega $ must be large in a neighborhood of
$\partial \Omega $, which suggests that $\abs{\{ z\in \mathbb{R}^{2d} : \bigabs{\inten_\Omega (z)
      -1_\Omega(z) }>\delta}\} \geq c_\delta  \perim{\Omega}$.

For a numerical illustration of Theorem~\ref{th_weak_l2} see
Figure~\ref{fig_inten}.

\begin{figure}
\centering
\subfigure[A domain of area $\approx$ 23, the spectrograms of the
eigenfunctions for $k=1 , 7$ and $17$, 
and the accumulated spectrogram with respect to the Gaussian $g(t)=2^{1/4} e^{-\pi t^2}$.
The shading of each eigenfunction is relative to its own scale of values.
]{
\includegraphics[scale=1]{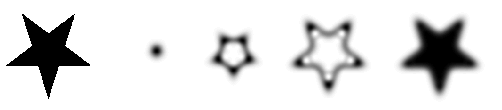}
}
\subfigure[The same experiment with the Gaussian window
$g(t)=2^{-1/4} e^{-\pi (t/2)^2}$, that is
is more concentrated in frequency than the Gaussian in (a)]{
\includegraphics[scale=1]{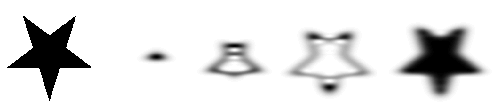}
}
\caption{The first eigenfunctions of a time-frequency localization
  operator with identical domain $\Omega $, but  with different
  windows. One obtains  two different (almost) partitions of unity on the same domain.}
\label{fig_funcs}
\end{figure}

\subsection{Universality}
In general, the  individual eigenfunctions and their spectrograms are difficult to
describe, since they  depend  on the underlying window $g$ in an
intricate manner (see Figure \ref{fig_funcs}).
Nevertheless, by \eqref{eq_C}   the spectrograms of the whole family $\{h^\Omega_1, \ldots, h^\Omega_{A_\Omega}\}$
almost form  a  partition of unity on $\Omega$. By and large this
property is independent of the window $g$ (which enters only through
some constants in the error estimates). Even more is true: the
asymptotic result of Theorem~\ref{th_one_point} is completely independent of
$g$.

In mathematics, the term ``universality''  is used   to describe phenomena
where   properties of complex  systems simplify in an asymptotic
regime  and then depend
only on few parameters. The primary examples are  the central limit
theorem in  probability or the eigenvalue distribution of random
matrices. (See \cite{de07-7} for
an overview of  universality in several contexts.)
 
In this sense,  Theorem~\ref{th_one_point} expresses a new and intriguing
universality property.
Whereas the accumulated spectrograms for a fixed  domain with different
windows may display a vastly different behavior for small scales, they
all approach the characteristic function in the scaling limit $R\to
\infty $. See Figure~\ref{fig_funcs}.

The universality property expressed by Theorem 1.3 has also a
probabilistic interpretation. Consider the finite dimensional space
generated by the short-time Fourier transform of the first $A_\Omega$
eigenfunctions of $H_\Omega$ and its associated reproducing kernel. The
accumulated spectrogram is the  one-point intensity of the determinantal
point process  associated with this  reproducing kernel (see \cite{DetPointRand}
for precise definitions). Thus, Theorem 1.3 asserts  that the asymptotic limit of this one-point
intensity is the uniform distribution on  $\Omega$,  independently
of the chosen window g.

\subsection{Ginibre's law}
It is instructive to discuss a case where all the objects can be computed
explicitly. Let $g(t) := 2^{1/4} e^{-\pi t^2}$, $t \in \Rst$,  be the
one-dimensional, $L^2$-normalized Gaussian and 
let \[\disk:= \set{(x,\xi) \in \Rst^2}{x^2 + \xi^2 \leq 1}\] be the
unit disk. By a result of  Daubechies \cite{da88} 
the eigenfunctions and eigenvalues of the time-frequency localization operator with window $g$ and domain
$\Omega= R\cdot \disk$ for arbitrary $R>0$  are the Hermite functions
\begin{align}
\label{eq_hermite}
h_{k+1}(t) = \frac{2^{1/4}}{\sqrt{k!}}\left(\frac{-1}{2\sqrt{\pi}}\right)^k
e^{\pi t^2} \frac{d^k}{dt^k}\left(e^{-2\pi t^2}\right), \qquad k \geq 0.
\end{align}
Remarkably, due to the symmetries of $g$ and $\disk$, the
eigenfunctions $h_k$ of $\locop_{R\disk }$ do not depend on $R$.
Identifying $z=(x,\xi ) \in \mathbb{R}^{2} $ with  $z := x + i \xi \in
\bC$, the short-time Fourier transform of the Hermite functions with
respect to $g$ is 
\begin{align*}
V_g h_{k+1}(\bar{z}) = e^{\pi i x \xi} \left(\frac{\pi^k}{k!}\right)^{1/2} z^k e^{-\pi\abs{z}^2/2},
\qquad k \geq 0.
\end{align*}
Hence the corresponding spectrograms are
\begin{align*}
\abs{V_g h_{k+1}(z)}^2 = \frac{\pi^k}{k!} \abs{z}^{2k} e^{-\pi\abs{z}^2},
\qquad k \geq 0, 
\end{align*}
and the accumulated spectrogram corresponding to $R\cdot\disk$ is
\begin{align*}
\inten_{R\cdot \disk}(z) = \sum_{k=0}^{\ceil{\pi R^2}-1}
\frac{\pi^k}{k!} \abs{z}^{2k} e^{-\pi\abs{z}^2}.
\end{align*}
Theorem \ref{th_one_point} says that
\begin{align*}
\inten_{R\cdot \disk}(Rz)=
\sum_{k=0}^{\ceil{\pi R^2}-1}
\frac{\pi^k}{k!} R^{2k} \abs{z}^{2k} e^{-\pi\abs{Rz}^2}
\longrightarrow 1_\disk(z),
\mbox{ in }L^1(\bC, dxd\xi)\mbox{ as }n \longrightarrow +\infty.
\end{align*}
This formula is one of Ginibre's limit laws \cite{gi65}. (Of course,
in this case more precise asymptotic estimates  are
possible.) 

\subsection{Technical overview}
As commonly done in the literature, we transfer the problem to the range of
the short-time Fourier transform $V_g L^2(\Rdst)$. This is a reproducing kernel subspace of $L^2(\Rtdst)$
and the time-frequency localization operator $\locom$ translates into a Toeplitz operator on $V_g L^2(\Rdst)$.

Our proof begins with the observation that $\inten_\Omega$ is the diagonal of the integral kernel
of the orthogonal projection operator $P_n$ from $V_g L^2(\Rdst)$ onto the linear span of the functions
$\sett{V_g h^\Omega_1, \ldots, V_g h^\Omega_n}$. We then  compare $P_n$ to the Toeplitz operator,
which has an explicit integral kernel for which we derive direct estimates. The relation
between the Toeplitz operator and $P_n$ can be understood as a threshold on its eigenvalues. We then resort
to the rich existing literature  on these
\cite{defeno02, feno01, herato94, herato97, la67-1, la67, la75-1, lawi80, rato94}.

\subsection{Organization}
Section \ref{sec_tools} introduces some basic tools from time-frequency analysis, including the
interpretation of time-frequency localization operators as Toeplitz operators on a certain reproducing kernel space.
In Section \ref{sec_eigenvalues} we recall the estimates on the profile of the eigenvalues of
time-frequency localization operators, and add a slight refinement. Section \ref{sec_bounds} develops a number of
technical estimates that are used in Section \ref{sec_res} to prove the main results. Finally, Section
\ref{sec_app} briefly presents an application of Theorem \ref{th_weak_l2} to signal processing.

\section{Phase-space tools}
\label{sec_tools}
We now introduce the basic phase-space tools related to the short-time Fourier transform.
\subsection{The range of the short-time Fourier transform}
Throughout the article we fix a window $g \in L^2(\Rdst)$ with normalization
$\norm{g}_2=1$. The short-time Fourier transform with respect to $g$ - cf. \eqref{eq_stft} - then defines an
isometry $V_g:L^2(\Rdst) \to L^2(\Rtdst)$.
(This is a consequence of Plancherel's theorem, see for example \cite[Chapter 1]{gr01}).
The adjoint of $V_g$ is $V^*_g:L^2(\Rtdst) \to L^2(\Rdst)$
\begin{align*}
V^*_g F(t) = \int_{\Rdst \times \Rdst} F(x,\xi) g(t-x)e^{2\pi i\xi t} dx d\xi,\qquad t\in\Rdst.
\end{align*}
Let $\VR:=\stft_g L^2(\Rdst) \subseteq L^2(\Rtdst)$ be the (closed) range of $V_g$. The orthogonal projection
$P_\VR: L^2(\Rtdst) \to \VR$ is given by $P_\VR = V_g V^*_g$. Explicitly, 
$P_\VR$ is the integral operator
\begin{align}
\label{eq_proj}
P_\VR F(z) &= \int_\Rtdst F(z') K(z,z') dz',
\qquad z=(x,\xi) \in \Rtdst,
\end{align}
with integral kernel
\begin{align}
\label{eq_rep_ker}
K(z,z')=\stft_g g(z-z') e^{2\pi i \xi x'},
\qquad z=(x,\xi), z'=(x',\xi') \in \Rtdst.
\end{align}
Using this description of $P_\VR$ it follows that $\VR$ is a \emph{reproducing kernel Hilbert subspace} of $\LtRtd$.
This means that each function $F \in \VR$ is continuous and satisfies
$F(z)=\int F(z') K(z,z') dz'$ for all $z \in \Rtdst$. The function $K$ is called the \emph{reproducing kernel} of
$\VR$.

Since $K$ is the integral kernel of an orthogonal projection, it satisfies
$K(z,z')=\overline{K(z',z)}$ and
\begin{align}
\label{eq_kernel_square}
K(z,z') = \int_\Rtdst K(z,z'')K(z'',z') dz'', \qquad z,z' \in \Rtdst.
\end{align}
In addition, if $\sett{E_k: k\geq 1}$ is an orthonormal basis of $\VR$, $K$ can be expanded as
\begin{align}
\label{eq_exp_ker}
K(z,z')=\sum_{k \geq 1} E_k(z) \overline{E_k(z')}, \qquad z,z' \in \Rtdst.
\end{align}
From now on, we use the notation
\begin{align*}
\env(z) := \abs{\stft_g g(z)}^2, \qquad z \in \Rtdst.
\end{align*}
Then $\env \in L^1(\Rtdst)$, $\int _{\mathbb{R}^{2d}} \env = \int
_{\mathbb{R}^{2d}} |V_gg|^2 = \|g\|_2^4 = 1$, $\env(z)=\env(-z)$ and 
\begin{align}
\label{eq_abs_ker}
\abs{K(z,z')}^2=\env(z-z'), \qquad z,z' \in \Rtdst.
\end{align}
\subsection{Time-frequency localization and Toeplitz operators}
\label{sec_tf_loc}
Using $V_g$ and $V^*_g$,
the time-frequency localization operator $\locom:L^2(\Rdst) \to L^2(\Rdst)$ from 
\eqref{eq_loc_op} can be written as
\begin{align*}
\locom f = V^*_g (1_\Omega \cdot V_g f), \qquad f \in \LtRd.
\end{align*}
Therefore,
\begin{align}
\label{eq_vgm}
(V_g \locom V^*_g) F = P_\VR (1_\Omega \cdot F), \qquad F \in \VR.
\end{align}
Hence, if we define the \emph{Gabor-Toeplitz operator} $M_\Omega: \VR \to \VR$ as
\begin{align*}
M_\Omega F := P_\VR ( 1_{\Omega} \cdot F ),
\qquad F \in \VR,
\end{align*}
then \eqref{eq_vgm} says that $\locom$ and $M_\Omega$ are related by
\begin{align}
\label{eq_conj}
V^*_g M_\Omega V_g = \locom.
\end{align}
As a consequence, $M_\Omega$ and $\locom$ enjoy the same spectral properties. Using the
diagonalization of $\locom$ in \eqref{eq_diag} and the notation
\begin{align*}
\eigf^\Omega_k = V_g \eigenf^\Omega_k, \qquad k \geq 1,
\end{align*}
$M_\Omega$ can be diagonalized as
\begin{align}
\label{eq_MO}
M_\Omega F = \sum_{k \geq 1} \lambdako \ip{F}{\eigf^\Omega_k}\eigf^\Omega_k, \qquad F \in \VR.
\end{align}
In addition, the accumulated spectrogram can be written as
\begin{align*}
\inten_\Omega (z) = \sum_{k=1}^{A_\Omega} \abs{\eigf^\Omega_k(z)}^2,
\qquad z \in \Rtdst.
\end{align*}
Since $\sett{\eigf^\Omega_k: k \geq 1}$ is an orthonormal subset of $\VR$ and $K(z,z)=1$, 
it follows from \eqref{eq_exp_ker} that
\begin{align}
\label{eq_sum_one}
\sum_{k \geq 1} \abs{\eigf^\Omega_k(z)}^2 \leq 1, \qquad z \in \Rtdst,
\end{align}
and consequently, the accumulated spectrogram satisfies 
\begin{align}
\label{eq_inten_bound}
0 \leq \inten_\Omega (z) \leq 1, \qquad z \in \Rtdst.
\end{align}
\subsection{Properties of Toeplitz operators}
\label{sec_toep}
Using \eqref{eq_proj} we see that $M_\Omega:\VR \to \VR$ can be described as
\begin{align*}
M_\Omega F(z) = \int_\Omega F(z') K(z,z') dz', \qquad z \in \Rtdst.
\end{align*}
We now note some simple properties of Toeplitz operators. These are well-known
\cite{bocogr04, cogr03, defeno02, duwozh01, en09, feno01}, but are normally stated in a slightly different form.
Therefore we sketch a short proof.
\begin{lemma}
\label{lemma_toep}
Let $g \in L^2(\Rdst)$ with $\norm{g}_2=1$ and let $\Omega \subseteq \Rtdst$ be a compact set. Then
$M_\Omega: L^2(\Rdst) \to L^2(\Rdst)$ is trace-class and satisfies
\begin{align}
\label{eq_spec}
0 \leq M_\Omega \leq I.
\end{align}
The traces of $M_\Omega$ and $M_\Omega^2$ are given by
\begin{align}
\label{eq_trace1}
&\trace(M_\Omega) = \int_\Omega K(z,z) dz = \mes{\Omega},
\\
\label{eq_trace2}
&\trace(M_\Omega^2)= \int_\Omega \int_\Omega \env(z-z') dz dz'.
\end{align}
\end{lemma}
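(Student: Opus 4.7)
The plan is to derive all three assertions from the integral representation $M_\Omega F(z) = \int_\Omega F(z') K(z,z')\, dz'$ (valid for $F\in\VR$), combined with the reproducing kernel identities \eqref{eq_kernel_square}--\eqref{eq_abs_ker} and the expansion \eqref{eq_exp_ker}.

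I would first establish \eqref{eq_spec}. For $F\in\VR$, using $P_\VR F = F$ and self-adjointness of $P_\VR$,
\[
\ip{M_\Omega F}{F} = \ip{P_\VR(1_\Omega F)}{F} = \ip{1_\Omega F}{P_\VR F} = \int_\Omega \abs{F(z)}^2\, dz,
\]
which lies between $0$ and $\norm{F}_2^2$, yielding $0\le M_\Omega\le I$ at once. For \eqref{eq_trace1}, I fix an arbitrary orthonormal basis $\sett{E_k: k\ge 1}$ of $\VR$. Since $M_\Omega$ is positive, it is trace-class iff $\sum_k \ip{M_\Omega E_k}{E_k} < \infty$, and by the identity above together with Tonelli (non-negative summands),
\[
\sum_k \ip{M_\Omega E_k}{E_k} = \sum_k \int_\Omega \abs{E_k(z)}^2\, dz = \int_\Omega K(z,z)\, dz = \mes{\Omega},
\]
using \eqref{eq_exp_ker} at $z=z'$ together with $K(z,z)=1$ (the latter follows from $K(z,z)=\sum_k\abs{E_k(z)}^2\ge 0$ combined with $\abs{K(z,z)}^2=\env(0)=1$). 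Compactness of $\Omega$ ensures finiteness.

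For \eqref{eq_trace2} I combine Parseval with self-adjointness of $M_\Omega$ to write
\[
\trace(M_\Omega^2) = \sum_k \norm{M_\Omega E_k}_2^2 = \sum_{j,k}\abs{\ip{M_\Omega E_k}{E_j}}^2 = \sum_{j,k}\Bigabs{\int_\Omega E_k(z')\overline{E_j(z')}\, dz'}^2,
\]
where the last equality is another application of the computation for \eqref{eq_spec}. Expanding each modulus squared as a double integral over $\Omega\times\Omega$ and collapsing $\sum_k E_k(z')\overline{E_k(z'')} = K(z',z'')$ together with $\sum_j\overline{E_j(z')}E_j(z'') = \overline{K(z',z'')}$ via \eqref{eq_exp_ker} gives
\[
\trace(M_\Omega^2) = \int_\Omega\int_\Omega \abs{K(z',z'')}^2\, dz'\, dz'' = \int_\Omega\int_\Omega \env(z'-z'')\, dz'\, dz''
\]
by \eqref{eq_abs_ker}.

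The main delicate point is justifying the interchange of summation and integration in the final step. Truncating the sums and using Cauchy--Schwarz to bound the truncated kernel sums uniformly by $\sqrt{K(z',z')K(z'',z'')}\le 1$ (the pointwise bound $\abs{K}\le 1$ follows from \eqref{eq_abs_ker} and $\env(0)=1$), and then invoking dominated convergence on the finite-measure set $\Omega\times\Omega$ resolves this. I do not expect any deeper obstacle; the algebra is purely formal once the kernel identities \eqref{eq_kernel_square}--\eqref{eq_exp_ker} are in hand.
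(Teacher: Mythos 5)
Your proof is correct, but it takes a genuinely different route from the paper's. The paper handles the trace computations by passing to the \emph{inflated} operator $\mpluso = P_\VR\, 1_\Omega\, P_\VR$ on all of $L^2(\Rtdst)$, which is block-diagonal with respect to $L^2(\Rtdst)=\VR\oplus\VR^\perp$; since $\mpluso$ is a positive integral operator on the full $L^2$ space, the authors can invoke the standard theorems (Simon, \emph{Trace Ideals}, Thms.~2.12 and 2.14) identifying the trace of a positive integral operator with the integral of its kernel diagonal, and $\trace(M_\Omega^2)=\trace(\mpluso^2)$ with the Hilbert--Schmidt norm of the kernel $K_{\mpluso}$. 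You instead stay inside $\VR$ and work basis-wise: positivity reduces trace-class to $\sum_k\ip{M_\Omega E_k}{E_k}<\infty$, and the kernel expansion \eqref{eq_exp_ker} together with Tonelli (for \eqref{eq_trace1}) and a truncation/dominated-convergence argument on the finite-measure set $\Omega\times\Omega$ (for \eqref{eq_trace2}) does the rest. Your approach buys self-containedness: it avoids the inflation trick and the citation to the integral-operator trace theorems, at the price of relying on the pointwise validity and absolute convergence of \eqref{eq_exp_ker} (which the paper states as known, so you are entitled to it) and of having to justify the sum--integral interchange by hand, which you do correctly via the uniform bound $\bigabs{\sum_{k\le N}E_k(z')\overline{E_k(z'')}}\le\sqrt{K(z',z')K(z'',z'')}\le 1$. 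The paper's route is shorter on the analysis because the measurability/convergence issues are absorbed into the quoted theorems; yours is more elementary and makes the mechanism (the diagonal of the kernel is identically $1$) more transparent. Both correctly establish \eqref{eq_spec} by the same one-line computation.
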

\begin{proof}
For $F \in \VR$ we compute
\begin{align*}
\ip{M_\Omega F}{F} = \ip{P_\VR ( 1_{\Omega} F )}{F}=\ip{1_\Omega F}{F}
= \int_\Omega \abs{F(z)}^2 dz.
\end{align*}
This gives \eqref{eq_spec}, and in particular shows that $M_\Omega$ is bounded and positive.

Let $\mpluso: L^2(\Rtdst) \to L^2(\Rtdst)$ be the \emph{inflated} operator
$\mpluso(F) := P_\VR (1_\Omega \cdot P_\VR(F))$. Hence, with respect to the decomposition
$L^2(\Rtdst) = \VR \oplus \VR^\perp$, $\mpluso$ is given by
\begin{align}
\label{eq_mplus}
\mpluso = \left[
\begin{array}{ll}
 M_\Omega & 0\\
 0 & 0
\end{array}\right].
\end{align}
Since $\mpluso$ is defined on $L^2(\Rtdst)$, its spectral properties can be easily related to its
integral kernel. Using
\eqref{eq_proj} we get the following formula for $\mpluso$
\begin{align*}
\mpluso F(z) =
\int_\Rtdst F(z') \int_\Rtdst 1_\Omega(z'') K(z,z'') K(z'',z') dz'' dz',
\qquad F \in L^2(\Rtdst).
\end{align*}
That is, $\mpluso$ has integral kernel
$K_{\mpluso}(z,z') := \int 1_\Omega(z'') K(z,z'') K(z'',z') dz''$.

Using \eqref{eq_abs_ker} we compute
\begin{align*}
\int_\Rtdst K_{\mpluso}(z,z) dz=
\int_\Rtdst \int_\Rtdst 1_\Omega(z') \env(z-z') dz dz' = \mes{\Omega} \norm{\env}_1
= \mes{\Omega}.
\end{align*}
Since $M_\Omega$ is positive, so is $\mpluso$ by \eqref{eq_mplus}. Hence, the previous calculation
shows that $\mpluso$ is trace-class and $\trace(\mpluso)=\mes{\Omega}$
(see for example \cite[Theorems 2.12 and 2.14]{si79}). Using \eqref{eq_mplus} we deduce that
$M_\Omega$ is trace class and $\trace(M_\Omega)=\trace(\mpluso)=\mes{\Omega}$. For~\eqref{eq_trace2}, we use again the fact that $\mpluso$ is positive to get
\begin{align*}
&\trace(M_\Omega^2)=\trace(\mpluso^2)=\norm{K_{\mpluso}}_2^2
\\
&\quad=\int_{\Rtdst}\int_{\Rtdst}K_{\mpluso}(z,z')K_{\mpluso}(z',z) dz dz'
\\
&\quad=
\int_{\Rtdst} \int_{\Rtdst} 1_\Omega(w) 1_\Omega(w')
\int_{\Rtdst}\int_{\Rtdst}
K(z,w) K(w,z') K(z',w') K(w',z)dz dz' dw dw'.
\end{align*}
Using \eqref{eq_kernel_square} and \eqref{eq_abs_ker} it follows that
\begin{align*}
\trace(M_\Omega^2)&=
\int_{\Rtdst} \int_{\Rtdst} 1_\Omega(w) 1_\Omega(w') K(w,w') K(w',w)dw dw'
\\
&= 
\int_{\Rtdst} \int_{\Rtdst} 1_\Omega(w) 1_\Omega(w') \env(w-w') dw dw',
\end{align*}
as desired.
\end{proof}
\section{Profile of the eigenvalues}
\label{sec_eigenvalues}
Let $g \in \LtRd$, $\norm{g}_2=1$, and consider the time-frequency localization operator
$\locom$ associated with a compact set $\Omega$.
The fundamental asymptotic behavior  of the profile of the eigenvalues $\sett{\lambdako: k \geq 1}$
are given by Proposition \ref{prop_eigen_soft}, and were obtained by Ramanathan and Topiwala 
in a weaker form and by Feichtinger and Nowak in full generality \cite{feno01,rato94}.

For sets with smooth boundary, Ramanathan and Topiwala \cite{rato94} quantified the order of convergence in 
\eqref{eq_asympt}. Later De Mari, Feichtinger and Nowak obtained fine asymptotics for families of sets where certain
geometric quantities are kept uniform \cite{defeno02} - the dilations of a set with smooth boundary being one such an
example. Moreover their results are also applicable to the hyperbolic setting \cite{defeno02}.

We will need a variation of the result in \cite{rato94}. 
\subsection{Functions of bounded variation}
\label{sec_bv}
We first collect  some basic facts about functions of bounded
variation. A real-valued function $f \in L^1(\Rdst)$ is said to have
\emph{bounded variation}, $f\in \BV (\Rdst )$,   if its distributional partial derivatives
are finite Radon measures. The variation of $f$ is defined as 
\begin{align*}
\var(f) := \sup
\sett{\int_\Rdst f(x) \divsymb \phi(x) dx: \phi \in
  C^1_c(\Rdst,\Rdst), \abs{\phi (x)}_2 \leq 1} \, ,
\end{align*}
where $C^1_c(\Rdst,\Rdst)$ denotes the class of  compactly supported $C^1$-vector fields
and $\divsymb$ is the divergence operator.

If $f$ is continuously differentiable, then $f\in \BV (\Rdst )$ simply
means that
$\partial_{x_1}f, \ldots $,   $  \partial_{x_d} f \in L^1(\Rdst)$, and 
$\var(f) = \int_\Rdst \abs{\nabla f(x)}_2 dx$.

A set $E \subseteq \Rdst$ is said to have \emph{finite perimeter} if its characteristic function $1_E$ is of bounded
variation, and the  perimeter of $E$  is $\perim{E} := \var(1_E)$. If
$E$ has a smooth boundary, then   $\perim{E}$ is just  the $(2d-1)$-Hausdorff
measure of the topological boundary. See \cite[Chapter 5]{evga92} for an extensive
discussion of $\BV $. 

We will use  the following approximation result from ~\cite[Sec.~5.2.2, Thm.~2]{evga92}.
\begin{prop}
\label{prop_bv}
Let $f \in \BV(\Rdst)$. Then there exists $\sett{f_k: k\geq 1} \subseteq \BV(\Rdst) \cap C^\infty(\Rdst)$
such that $f_k \longrightarrow f$ in $L^1(\Rdst)$ and $\int \abs{\nabla f_k(x)}_2 dx \longrightarrow \var(f)$,
as $k \longrightarrow +\infty$.
\end{prop}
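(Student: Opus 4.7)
\medskip

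\noindent\textbf{Proof proposal for Proposition \ref{prop_bv}.}
The plan is to obtain the smooth approximants by direct mollification, and to show that no variation is gained or lost in the limit. Fix a standard nonnegative, radially symmetric mollifier $\eta\in C^\infty_c(\Rdst)$ with $\int\eta=1$ and $\supp \eta\subseteq \{|x|\leq 1\}$, set $\eta_\varepsilon(x)=\varepsilon^{-d}\eta(x/\varepsilon)$, and define
\[
f_k \;:=\; f*\eta_{1/k}, \qquad k\geq 1.
\]
Since $f\in L^1(\Rdst)$, each $f_k$ lies in $C^\infty(\Rdst)\cap L^1(\Rdst)$, and $f_k\to f$ in $L^1(\Rdst)$ by the standard approximation-of-the-identity argument. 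It therefore remains to verify that $f_k\in \BV(\Rdst)$ and that $\int |\nabla f_k|_2 \to \var(f)$.

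The upper bound $\limsup_k \int|\nabla f_k(x)|_2\,dx \leq \var(f)$ is the quantitative core of the argument. By hypothesis the distributional gradient $Df$ is an $\Rdst$-valued Radon measure with total variation $|Df|(\Rdst)=\var(f)$. Since $\eta_{1/k}$ is smooth, differentiation commutes with convolution against the measure $Df$, so $f_k$ is smooth and its classical gradient satisfies
\[
\nabla f_k(x) \;=\; \int_\Rdst \eta_{1/k}(x-y)\,dDf(y).
\]
The triangle inequality for vector measures gives $|\nabla f_k(x)|_2\leq \int \eta_{1/k}(x-y)\,d|Df|(y)=(\eta_{1/k}*|Df|)(x)$; integrating and using $\int\eta_{1/k}=1$ together with Fubini yields $\int|\nabla f_k|_2 \leq |Df|(\Rdst)=\var(f)$. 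In particular $f_k\in \BV(\Rdst)$.

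The matching lower bound $\liminf_k \int|\nabla f_k|_2 \geq \var(f)$ is the lower semicontinuity of the variation under $L^1$ convergence, applied directly from the definition: for every test field $\phi\in C^1_c(\Rdst,\Rdst)$ with $|\phi(x)|_2\leq 1$, the $L^1$ convergence $f_k\to f$ gives
\[
\int_\Rdst f(x)\,\divsymb\phi(x)\,dx \;=\; \lim_{k\to\infty} \int_\Rdst f_k(x)\,\divsymb\phi(x)\,dx \;\leq\; \liminf_{k\to\infty} \int_\Rdst |\nabla f_k(x)|_2\,dx,
\]
where the last inequality is integration by parts (valid since $f_k$ is smooth) combined with Cauchy--Schwarz. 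Taking the supremum over admissible $\phi$ produces $\var(f)\leq \liminf_k\int|\nabla f_k|_2$. Combining this with the previous paragraph yields $\int|\nabla f_k|_2\to\var(f)$, which completes the proof.

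The only delicate point is the upper bound, where one must interpret $\nabla f_k$ as the convolution of the smooth function $\eta_{1/k}$ with the \emph{vector} measure $Df$ and then pull the Euclidean norm inside the integral against the \emph{total variation} measure $|Df|$; the rest is routine. Observe that this construction works globally on $\Rdst$ precisely because the mollification stays within the space — on a bounded open set a partition of unity would be needed, as in \cite[Sec.~5.2.2, Thm.~2]{evga92}, but no such complication arises here.
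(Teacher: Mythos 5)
Your proof is correct. Note, however, that the paper does not prove Proposition \ref{prop_bv} at all: it is quoted verbatim from \cite[Sec.~5.2.2, Thm.~2]{evga92}, so there is no internal argument to compare against. What you supply is a complete, self-contained proof by global mollification: the identity $\nabla(f*\eta_{1/k})=\eta_{1/k}*Df$, the bound $\abs{\nabla f_k}_2\leq \eta_{1/k}*\abs{Df}$ followed by Fubini for the upper bound $\int\abs{\nabla f_k}_2\,dx\leq \var(f)$, and lower semicontinuity of the variation under $L^1$ convergence for the matching lower bound --- all steps are sound, including the integration by parts against smooth $f_k$ and Cauchy--Schwarz in the test-field estimate. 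This is genuinely simpler than the cited Evans--Gariepy theorem, which is stated for an arbitrary open set $U$ and therefore requires a partition of unity with variable mollification scales to avoid leakage across $\partial U$; as you correctly observe, on all of $\Rdst$ plain convolution suffices, so your argument buys a short direct proof at the (mild) cost of only covering the whole-space case, which is the only case the paper uses. One small point worth making explicit: your opening identification of $\var(f)$ with the total mass $\abs{Df}(\Rdst)$ of the gradient measure is the Riesz-representation/structure theorem for \BV{} functions; it is standard, but since the paper defines $\var(f)$ only through the supremum over test fields, a sentence acknowledging this (or phrasing the upper bound directly via the polar decomposition $dDf=\sigma\,d\abs{Df}$, as you do implicitly) would make the proof fully self-contained relative to the paper's definitions.
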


Note that this  proposition does not assert that $\var(f-f_k) \longrightarrow 0$.

The following lemma  quantifies the error introduced by
regularization.
\begin{lemma}
\label{lemma_var}
Let $f \in \BV(\Rdst)$ and $\varphi \in \LiRd$ with $\int \varphi = 1$. Then
\begin{align*}
\norm{f*\varphi-f}_1 \leq \var(f) \int_\Rdst \abs{x}_2 \abs{\varphi(x)} dx.
\end{align*}
\end{lemma}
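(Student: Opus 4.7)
The plan is to first prove the inequality for smooth BV functions by a direct computation using the fundamental theorem of calculus, and then transfer the result to general $f \in \BV(\Rdst)$ via the approximation provided by Proposition \ref{prop_bv}.

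Assume first that $f \in C^\infty(\Rdst) \cap \BV(\Rdst)$, so that $\var(f) = \int_\Rdst \abs{\nabla f(x)}_2 \, dx$. Using $\int \varphi = 1$, write
\begin{align*}
f*\varphi(x) - f(x) = \int_\Rdst \varphi(y) \bigl( f(x-y) - f(x) \bigr) \, dy
= -\int_\Rdst \varphi(y) \int_0^1 \nabla f(x-ty) \cdot y \, dt \, dy.
\end{align*}
Taking absolute values, integrating in $x$, and applying Fubini (the integrand is non-negative once we take absolute values) gives
\begin{align*}
\norm{f*\varphi - f}_1 \leq \int_\Rdst \abs{\varphi(y)} \abs{y}_2 \int_0^1 \int_\Rdst \abs{\nabla f(x-ty)}_2 \, dx \, dt \, dy.
\end{align*}
By translation invariance of Lebesgue measure the inner integral equals $\int_\Rdst \abs{\nabla f}_2 = \var(f)$, which yields the desired bound in the smooth case.

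For the general case, apply Proposition \ref{prop_bv} to obtain a sequence $\{f_k\} \subseteq \BV(\Rdst) \cap C^\infty(\Rdst)$ with $f_k \to f$ in $L^1$ and $\int \abs{\nabla f_k}_2 \to \var(f)$. Since convolution with $\varphi \in L^1$ is bounded on $L^1$, we have $f_k * \varphi \to f * \varphi$ in $L^1$, hence $\norm{f_k * \varphi - f_k}_1 \to \norm{f * \varphi - f}_1$. Applying the smooth case to each $f_k$ yields
\begin{align*}
\norm{f_k * \varphi - f_k}_1 \leq \Bigl(\int_\Rdst \abs{\nabla f_k(x)}_2 \, dx\Bigr) \int_\Rdst \abs{y}_2 \abs{\varphi(y)} \, dy,
\end{align*}
and passing to the limit $k \to \infty$ completes the proof.

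The only subtle point is the approximation step: as the excerpt explicitly warns, one does \emph{not} have $\var(f-f_k) \to 0$, so one cannot simply invoke continuity of the map $f \mapsto f*\varphi - f$ in the $\BV$ seminorm. The argument sidesteps this by working in $L^1$: it is the $L^1$ norm $\norm{f_k*\varphi - f_k}_1$ that converges (via $L^1$-continuity of convolution), while $\int |\nabla f_k|_2$ converges to $\var(f)$ by the choice of approximation. This mismatch between the quantity being bounded (an $L^1$ difference) and the quantity providing the bound (a $\BV$ seminorm) is precisely what makes Proposition \ref{prop_bv} the right tool, rather than any stronger $\BV$-approximation result.
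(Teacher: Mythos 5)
Your proof is correct and follows essentially the same route as the paper: the fundamental theorem of calculus along the segment plus Fubini and translation invariance for the smooth case, then the approximation of Proposition \ref{prop_bv}, exploiting exactly the fact that only $L^1$-convergence of $f_k$ and convergence of $\int\abs{\nabla f_k}_2$ to $\var(f)$ are needed (not $\var(f-f_k)\to 0$). No gaps.
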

\begin{proof}
Assume first that $f \in C^\infty(\Rdst)$ and estimate
\begin{align*}
&\norm{f*\varphi-f}_1 =
\int_\Rdst \abs{\int_\Rdst f(y)\varphi(x-y) \,dy - f(x)} dx
\\
&\qquad =
\int_\Rdst \abs{\int_\Rdst (f(y)-f(x))\varphi(x-y) \,dy} dx
\\
&\qquad =
\int_\Rdst \abs{\int_\Rdst (f(x)-f(y))\varphi(x-y) \,dy} dx
\\
&\qquad =
\int_\Rdst
\abs{\int_\Rdst \int_0^1 \ip{\nabla f(t(x-y)+y)}{x-y} \varphi(x-y) dt dy} dx
\\
&\qquad \leq 
\int_0^1 \int_\Rdst
\int_\Rdst \abs{\nabla f(t(x-y)+y)}_2 \abs{x-y}_2 \abs{\varphi(x-y)} dx dy dt
\\
&\qquad = 
\int_0^1 \int_\Rdst
\int_\Rdst \abs{\nabla f(tx+y)}_2 \abs{x}_2 \abs{\varphi(x)} dx dy dt
\\
&\qquad = 
\bignorm{\abs{\nabla f}_2}_1 \int_\Rdst \abs{x}_2 \abs{\varphi(x)} dx.
\end{align*}
For an arbitrary function of bounded variation $f$, 
Proposition \ref{prop_bv} implies the existence of a sequence
$\sett{f_k:k\geq 1} \subseteq BV(\Rdst) \cap C^\infty(\Rdst)$ such that
$f_k \longrightarrow f$ in $L^1$ and
$\bignorm{\abs{\nabla f_k}_2}_1 \longrightarrow \var(f)$. The desired estimate now follows by approximation.
\end{proof}

\subsection{Estimates on eigenvalues}
We now quote the standard estimate for the eigenvalue distribution.
This estimate is ubiquitous in the literature - see for example \cite[Thm.~1]{la75-1}, \cite[Thm.~2]{rato94} or
\cite[Remark (iii)]{feno01}. For lack of an exact quotable reference we sketch a proof
following \cite{feno01}.
\begin{lemma}
\label{lemma_trace_norm}
Let $g \in \LtRd$, $\norm{g}_2=1$, and let $\Omega \subset \Rtdst$ be compact. Then
for $\delta \in (0,1)$,
\begin{align*}
\Bigabs{\# \sett{k \geq 1: \lambdako > 1-\delta} - \mes{\Omega}}
\leq \max\sett{\frac{1}{\delta},\frac{1}{1-\delta}}
\abs{\int_{\Omega} \int_{\Omega} \env(z-z') dzdz' - \mes{\Omega}}.
\end{align*}
\end{lemma}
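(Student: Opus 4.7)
The plan is to derive the inequality from the two trace identities in Lemma~\ref{lemma_toep}. Set $N(\delta) := \#\sett{k \geq 1: \lambdako > 1-\delta}$ and recall from \eqref{eq_spec} that all eigenvalues satisfy $\lambdako \in [0,1]$. Since $\sum_k \lambdako = \trace(M_\Omega) = \mes{\Omega}$, splitting the sum at the threshold $1-\delta$ gives the telescoping identity
\begin{align*}
\mes{\Omega} - N(\delta)
= \sum_{\lambdako \leq 1-\delta} \lambdako \;-\; \sum_{\lambdako > 1-\delta} (1 - \lambdako).
\end{align*}

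The key observation I would then make is that each term on the right can be dominated by $\lambdako(1-\lambdako)$ up to the stated constants: on $\sett{\lambdako \leq 1-\delta}$, one has $\lambdako \leq \lambdako(1-\lambdako)/\delta$, and on $\sett{\lambdako > 1-\delta}$, one has $(1-\lambdako) \leq \lambdako(1-\lambdako)/(1-\delta)$. Taking absolute values and using the triangle inequality yields
\begin{align*}
\bigabs{N(\delta) - \mes{\Omega}}
\leq \max\!\sett{\tfrac{1}{\delta},\tfrac{1}{1-\delta}} \sum_{k \geq 1} \lambdako(1-\lambdako).
\end{align*}

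To finish, I would identify the sum as $\trace(M_\Omega) - \trace(M_\Omega^2)$. Applying \eqref{eq_trace1} and \eqref{eq_trace2} gives
\begin{align*}
\sum_{k \geq 1} \lambdako(1-\lambdako) = \mes{\Omega} - \int_{\Omega}\int_{\Omega} \env(z-z')\,dz\,dz',
\end{align*}
and this quantity is nonnegative because $0 \leq M_\Omega \leq I$ forces $M_\Omega^2 \leq M_\Omega$, so it coincides with the absolute value on the right-hand side of the claimed inequality.

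There is no real obstacle: the argument is essentially a two-line comparison of linear and quadratic traces, and the bounds $\lambdako \in [0,1]$ plus the positivity $\trace(M_\Omega) \geq \trace(M_\Omega^2)$ do all the work. The only step worth stating carefully is the elementary bound $\min\{\lambdako, 1-\lambdako\} \leq \lambdako(1-\lambdako)/\min\{\delta,1-\delta\}$ that justifies the $\max$ appearing in the statement.
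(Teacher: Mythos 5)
Your proof is correct and takes essentially the same route as the paper: the paper encodes your term-by-term eigenvalue bounds as the pointwise inequality $\abs{G(t)} \leq \max\sett{\frac{1}{\delta},\frac{1}{1-\delta}}(t-t^2)$ for the step function $G$ and writes the comparison as $\abs{\trace(G(M_\Omega))} \leq \max\sett{\frac{1}{\delta},\frac{1}{1-\delta}}\trace(M_\Omega - M_\Omega^2)$, then finishes with the same trace identities from Lemma~\ref{lemma_toep}.
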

\begin{proof}
Let $\delta \in (0,1)$ and $G:[0,1]\to\Rst$ be the function
\begin{align*}
G(t) := \left\{
\begin{array}{ll}
-t, & \mbox{if } 0 \leq t \leq 1-\delta,\\
1-t, & \mbox{if } 1-\delta < t \leq 1.
\end{array}
\right.
\end{align*}
and note that $\abs{G(t)} \leq \max\sett{\frac{1}{\delta},\frac{1}{1-\delta}}
(t-t^2)$ for $t \in [0,1]$. Since $0 \leq M_\Omega \leq 1$ we estimate
\begin{align*}
&\Bigabs{\# \sett{k \geq 1: \lambdako > 1-\delta} - \mes{\Omega}}
=\abs{\trace(G(M_\Omega))} \leq \trace(\abs{G}(M_\Omega))
\\
&\qquad\leq \max\sett{\frac{1}{\delta},\frac{1}{1-\delta}} \trace(M_\Omega-M_\Omega^2).
\end{align*}
The formulas in Lemma \ref{lemma_toep} complete the proof.
\end{proof}

Finally, we obtain the following estimate, which is a variation of the result in \cite{rato94}. While \cite{rato94}
uses the co-area formula, we resort to Lemma \ref{lemma_var}.
\begin{prop}
\label{prop_eigen}
Let $\Omega \subset \Rtdst$ be a compact set with finite perimeter and assume that
$g \in \modstarsp(\Rdst)$. Then for each $\delta \in (0,1)$,
\begin{align*}
\Bigabs{ \#\sett{k:\lambdako > 1-\delta} - \mes{\Omega} }
\leq \max\sett{\frac{1}{\delta},\frac{1}{1-\delta}} \norm{g}_{\modstar}^2 \perim{\Omega}.
\end{align*}
\end{prop}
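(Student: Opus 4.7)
The plan is to combine Lemma~\ref{lemma_trace_norm} with a sharp estimate for the defect $|\trace(M_\Omega^2)-\mes{\Omega}|$ that exploits the bounded variation of $1_\Omega$ via Lemma~\ref{lemma_var}. By Lemma~\ref{lemma_trace_norm} together with the trace formulas \eqref{eq_trace1} and \eqref{eq_trace2} of Lemma~\ref{lemma_toep}, it suffices to prove the window-dependent bound
\begin{align*}
\Bigabs{\trace(M_\Omega^2) - \mes{\Omega}}
= \Bigabs{\int_\Omega\int_\Omega \env(z-z')\,dz\,dz' - \mes{\Omega}}
\leq \norm{g}_{\modstar}^2 \perim{\Omega}.
\end{align*}

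The key identity I would use is that $1_\Omega^2 = 1_\Omega$ and $\int_{\Rtdst}\env = 1$, which allow me to rewrite both quantities as inner products against $1_\Omega$. Indeed, $\mes{\Omega} = \int 1_\Omega\cdot 1_\Omega\,dz$, whereas $\trace(M_\Omega^2) = \int\int 1_\Omega(z)\,1_\Omega(z')\,\env(z-z')\,dz'\,dz = \int 1_\Omega(z)\,(1_\Omega*\env)(z)\,dz$, using the symmetry $\env(z)=\env(-z)$. Subtracting,
\begin{align*}
\trace(M_\Omega^2) - \mes{\Omega} = \int_{\Rtdst} 1_\Omega(z)\,\bigl((1_\Omega*\env)(z) - 1_\Omega(z)\bigr)\,dz,
\end{align*}
and since $0\leq 1_\Omega\leq 1$ the absolute value of this quantity is at most $\norm{1_\Omega*\env - 1_\Omega}_1$.

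To estimate the latter, I would invoke Lemma~\ref{lemma_var} with $f=1_\Omega$ and $\varphi=\env$ (valid because $\env\in L^1(\Rtdst)$ with total mass $1$, and $1_\Omega\in\BV(\Rtdst)$ with $\var(1_\Omega)=\perim{\Omega}$ by the standing hypothesis on $\Omega$). This yields
\begin{align*}
\norm{1_\Omega*\env - 1_\Omega}_1 \leq \perim{\Omega}\int_{\Rtdst} \abs{z}\,\env(z)\,dz = \perim{\Omega}\,\norm{g}_{\modstar}^2,
\end{align*}
where the last equality is the very definition \eqref{eq_modstar} of the $\modstar$-norm, since $\env=\abs{V_g g}^2$. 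Chaining the three inequalities produces the desired bound.

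There is no real technical obstacle here; the entire argument is a matter of assembling the tools already at hand. The only point that warrants care is verifying that the inner-product manipulation in the second step is rigorous (all integrands are bounded and compactly supported or in $L^1$, so Fubini applies without issue) and that the hypothesis $g\in\modstarsp(\Rdst)$ is exactly what is needed to make $\int\abs{z}\,\env(z)\,dz$ finite and equal to $\norm{g}_{\modstar}^2$.
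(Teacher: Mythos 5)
Your argument is correct and is essentially the paper's own proof: both rewrite $\int_\Omega\int_\Omega \env(z-z')\,dz\,dz' - \mes{\Omega}$ as $\int_\Omega\bigl((1_\Omega*\env) - 1_\Omega\bigr)$, bound it by $\norm{1_\Omega*\env - 1_\Omega}_1$, and then apply Lemma~\ref{lemma_var} together with Lemma~\ref{lemma_trace_norm}. Your write-up is simply a more explicit version of the same chain of estimates.
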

\begin{proof}
Recall that $\env(z)=\abs{V_gg(z)}^2=\env(-z)$, let $F:=1_\Omega$ and note that
\begin{align*}
\abs{\int_{\Omega} \int_{\Omega} \env(z-z') dzdz' - \mes{\Omega}}
= \abs{\int_\Omega (F*\env)(z') - F(z') dz'} \leq \norm{F*\env - F}_1.
\end{align*}
We now combine Lemmas \ref{lemma_var} and \ref{lemma_trace_norm} and
the desired conclusion follows.
\end{proof}
\begin{rem}
{\rm The results in \cite{defeno02} provide an asymptotic converse inequality to the estimate in Proposition
\ref{prop_eigen}, showing that
for certain classes of sets and small $\delta$, the error $\abs{ \#\sett{k:\lambdako > 1-\delta} -
\mes{\Omega} }$ is also bounded from below by $\perim{\Omega}$.  }
\end{rem}
\begin{rem}
\label{rem_mod}
{\rm The class $\modstarsp(\Rdst)$ is closely related to certain \emph{modulation spaces}.
Modulation spaces are Banach spaces of distributions $f$ defined by
decay  conditions of the short-time Fourier transform  $V_\phi f$ with
respect to a fixed Schwartz function $\phi$. 
In particular, the spaces $M^1_{1}(\Rdst)$ and $M^2_{1/2}(\Rdst)$ are defined 
by the norms
\begin{align*}
\norm{f}_{M^1_{1}} &:= \int_\Rdst \abs{z} \abs{V_\phi f(z)} dz,
\\
\norm{f}^2_{M^2_{1/2}} &:= \int_\Rdst \abs{z} \abs{V_\phi f(z)}^2 dz.
\end{align*}
It is then easy to see that $M^1_{1}(\Rdst) \subseteq \modstarsp(\Rdst) \subseteq M^2_{1/2}(\Rdst)$.
Easy sufficient condition for the membership in $M^1_1(\Rdst )$ and
$M^*(\Rdst )$ can be given by means of embeddings of Fourier-Lebesgue
spaces in modulation spaces~\cite{GG02}. For instance, if 
$$
\int _{\Rdst } \Big(|g(x)|^2 + |\hat{g} (x)|^2 \Big) \, (1+|x|^2)^{1/d
  + 1/2 +\epsilon } \, dx < \infty
$$
for some $\epsilon >0$, then $g\in M^1_1(\Rdst ) \subseteq M^*(\Rdst )$. 
For the theory of modulation spaces we refer to ~\cite{gr01}, for a
historical  overview  see \cite{fe06}.}
\end{rem}

\section{Some technical estimates}
\label{sec_bounds}
We now derive a number of lemmas. In the next section we combine them in several ways to obtain the
main results. Recall that we always  assume that $g \in L^2(\Rdst)$ and $\norm{g}_2=1$.
We use the notation from Section \ref{sec_tools}. The following lemma is one of the key observations in our proof.
\begin{lemma}
\label{lemma_tko}
Let $\Omega \subset \Rtdst$ be a compact set. Then the following formula holds
\begin{align*}
(1_\Omega * \env)(z)=\sum_{k \geq 1} \lambdako \abs{\eigf^\Omega_k(z)}^2,
\quad
z \in \Rtdst.
\end{align*}
\end{lemma}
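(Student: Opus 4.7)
The plan is to identify both sides of the claimed identity as the diagonal of the integral kernel of the \emph{inflated} Toeplitz operator $\mpluso = P_\VR \circ (1_\Omega \cdot) \circ P_\VR$ on $L^2(\Rtdst)$, introduced in the proof of Lemma~\ref{lemma_toep}.

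First, I would compute this kernel on the diagonal from the explicit formula
$$K_{\mpluso}(z,z') = \int_{\Rtdst} 1_\Omega(z'')\, K(z,z'')\,K(z'',z')\, dz''$$
already derived in Lemma~\ref{lemma_toep}. Setting $z'=z$ and using $K(z'',z)=\overline{K(z,z'')}$ together with the pointwise identity \eqref{eq_abs_ker}, i.e.\ $\abs{K(z,z'')}^2=\env(z-z'')$, together with the symmetry $\env(-w)=\env(w)$, immediately gives
$$K_{\mpluso}(z,z)=\int_{\Rtdst}1_\Omega(z'')\,\env(z-z'')\,dz''=(1_\Omega*\env)(z).$$

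Second, I would expand the same kernel spectrally. By the block form \eqref{eq_mplus} of $\mpluso$ and the diagonalization \eqref{eq_MO} of $M_\Omega$, the non-zero part of the spectral decomposition of $\mpluso$ on $L^2(\Rtdst)$ is
$$\mpluso F=\sum_{k\geq 1}\lambdako\,\ip{F}{\eigf^\Omega_k}\eigf^\Omega_k,\qquad F\in L^2(\Rtdst),$$
so the integral kernel of $\mpluso$ admits the expansion
$$K_{\mpluso}(z,z')=\sum_{k\geq 1}\lambdako\,\eigf^\Omega_k(z)\,\overline{\eigf^\Omega_k(z')},$$
and in particular $K_{\mpluso}(z,z)=\sum_{k\geq 1}\lambdako\,\abs{\eigf^\Omega_k(z)}^2$. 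Comparing the two expressions yields the lemma.

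The only delicate point is to justify that the two formulas for $K_{\mpluso}(z,z)$ genuinely coincide \emph{pointwise}, not just almost everywhere, so that one really obtains an identity valid for every $z\in\Rtdst$. I would handle this with an appeal to Mercer-type reasoning: $\mpluso$ is a positive, trace-class operator (Lemma~\ref{lemma_toep}), the reproducing kernel $K$ is continuous (as $\env=\abs{V_g g}^2$ is continuous), hence $K_{\mpluso}$ is continuous; moreover, each $\eigf^\Omega_k=V_g h^\Omega_k$ is continuous and the series $\sum_k\lambdako\abs{\eigf^\Omega_k(z)}^2$ is dominated termwise by $\sum_k\abs{\eigf^\Omega_k(z)}^2\leq 1$ (cf.~\eqref{eq_sum_one}), allowing Fubini and dominated convergence to promote the equality from an $L^1$ statement (obtained by testing against indicator functions using the trace formulas of Lemma~\ref{lemma_toep}) to a pointwise one. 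This verification is the only step requiring care; once it is in place, the identity follows directly from the two computations of the kernel on the diagonal.
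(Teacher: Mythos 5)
Your argument is correct in substance, but it takes a slightly more roundabout route than the paper and, in doing so, creates for itself the one technical difficulty that the paper's proof avoids. The paper does not work with the integral kernel of $\mpluso$ at all: it simply evaluates the quadratic form $\ip{M_\Omega K_z}{K_z}$ in two ways, where $K_z:=\overline{K(z,\cdot)}\in\VR$ is the reproducing kernel at $z$. On one hand $\ip{M_\Omega K_z}{K_z}=\ip{1_\Omega K_z}{K_z}=(1_\Omega*\env)(z)$; on the other hand the diagonalization \eqref{eq_MO} gives $\ip{M_\Omega K_z}{K_z}=\sum_k\lambdako\abs{\ip{\eigf^\Omega_k}{K_z}}^2=\sum_k\lambdako\abs{\eigf^\Omega_k(z)}^2$, the last step being the reproducing property $F(z)=\ip{F}{K_z}$. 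Since this is an identity between two numbers attached to the fixed vector $K_z$, it holds for \emph{every} $z$ with no further discussion; your ``delicate point'' never arises. Morally the two proofs compute the same quantity, since $K_{\mpluso}(z,z)=\ip{1_\Omega K_z}{K_z}$, so what your version costs is precisely the need to make sense of the diagonal of a trace-class kernel.

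On that delicate point, your justification is not yet watertight as written. The spectral expansion $K_{\mpluso}(z,z')=\sum_k\lambdako\eigf^\Omega_k(z)\overline{\eigf^\Omega_k(z')}$ is a priori an identity in $L^2(\Rtdst\times\Rtdst)$, i.e.\ almost everywhere on the product space, and the diagonal is a null set; ``Fubini and dominated convergence'' do not by themselves upgrade an a.e.\ identity to one valid on a null set, and testing against indicator functions via the trace formulas only yields integrated (hence again a.e.) information. The correct patch is a continuity argument: by Cauchy--Schwarz and \eqref{eq_sum_one}, $\sum_{k>N}\lambdako\abs{\eigf^\Omega_k(z)}\abs{\eigf^\Omega_k(z')}\leq\lambda^\Omega_{N+1}\to 0$ uniformly in $(z,z')$, so the spectral series converges uniformly to a continuous function; the explicit kernel $\int 1_\Omega(z'')K(z,z'')K(z'',z')\,dz''$ is also continuous. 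Two continuous functions that agree a.e.\ agree everywhere, in particular on the diagonal. With that replacement your proof is complete; alternatively, switching to the paper's quadratic-form formulation removes the issue entirely.
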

\begin{proof}
Let $K(z,z')=\stft_g g(z-z') e^{2\pi i z_2 z'_1}$ be the reproducing kernel of $\VR$
and  $K_z  := \overline{K(z,\cdot)} \in \VR$. Hence for all $F \in \VR$,
\begin{align*}
F(z) = \int_{\Rtdst} K(z,z') F(z') \, dz'  = \ip{F}{K_z}, \qquad z \in \Rtdst.
\end{align*}
First we  compute
\begin{align*}
\ip{M_\Omega K_z}{K_z} &= \ip{1_\Omega K_z}{K_z}
\\
&= \int_{\Rtdst} 1_\Omega(z') K_z(z') \overline{K_z(z')} \, dz'
\\
&= \int_{\Rtdst} 1_\Omega(z') \env(z-z') \, dz'
= (1_\Omega * \env ) (z).
\end{align*}
Second, we use \eqref{eq_MO} to compute
\begin{align*}
\ip{M_\Omega K_z}{K_z}&=\ip{\sum_k \lambdako \ip{K_z}{\eigf^\Omega_k} \eigf^\Omega_k}{K_z}
\\
&=
\sum_k \lambdako \abs{\ip{\eigf^\Omega_k}{K_z}}^2
\\
&=
\sum_k \lambdako \abs{\eigf^\Omega_k(z)}^2.
\end{align*}
The lemma follows by equating the two formulae for $\ip{M_\Omega K_z}{K_z}$.
\end{proof}
We now use Lemma \ref{lemma_tko} to bound the error between
$\inten_\Omega$ and $1_\Omega$.
\begin{lemma}
\label{lemma_bound_eig}
Let $\Omega \subset \Rtdst$ be compact and set
\begin{align}
\label{eq_error}
\error(\Omega) := 1-\frac{\sum_{k=1}^{A_\Omega} \lambdako}{\mes{\Omega}}.
\end{align}
Then
\begin{align*}
\frac{1}{\mes{\Omega}}
\bignorm{\inten_{\Omega}- (1_{\Omega}*\env)}_1 \leq
\left( \frac{1}{\mes{\Omega}} + 2\error(\Omega) \right).
\end{align*}
\end{lemma}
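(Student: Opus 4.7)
The plan is to use Lemma \ref{lemma_tko} to rewrite $1_\Omega * \env$ as the weighted sum $\sum_{k\geq 1} \lambdako |\eigf_k^\Omega|^2$, so that the difference $\inten_\Omega - 1_\Omega * \env$ becomes a linear combination of the spectrograms $|\eigf_k^\Omega|^2$. Explicitly, I would split
\begin{align*}
\inten_\Omega(z) - (1_\Omega * \env)(z)
= \sum_{k=1}^{A_\Omega} (1-\lambdako)\, \abs{\eigf_k^\Omega(z)}^2
- \sum_{k>A_\Omega} \lambdako\, \abs{\eigf_k^\Omega(z)}^2,
\end{align*}
where the two sums are already of opposite sign because $0\leq \lambdako\leq 1$.

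Next I would apply the triangle inequality for the $L^1$-norm. Since $V_g$ is an isometry and the $\eigenf_k^\Omega$ are orthonormal, $\bignorm{|\eigf_k^\Omega|^2}_1 = \norm{\eigf_k^\Omega}_2^2 = 1$ for every $k$. This yields
\begin{align*}
\bignorm{\inten_\Omega - 1_\Omega * \env}_1
\leq \sum_{k=1}^{A_\Omega}(1-\lambdako) + \sum_{k>A_\Omega} \lambdako.
\end{align*}
Using the trace identity $\sum_{k\geq 1}\lambdako = \trace(M_\Omega) = \mes{\Omega}$ from Lemma \ref{lemma_toep}, the tail sum equals $\mes{\Omega} - \sum_{k=1}^{A_\Omega}\lambdako$, so the right-hand side simplifies to $A_\Omega + \mes{\Omega} - 2\sum_{k=1}^{A_\Omega}\lambdako$.

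Finally, recognizing $\sum_{k=1}^{A_\Omega}\lambdako = \mes{\Omega}\bigl(1-\error(\Omega)\bigr)$ from the definition \eqref{eq_error}, I get
\begin{align*}
\bignorm{\inten_\Omega - 1_\Omega * \env}_1 \leq A_\Omega - \mes{\Omega} + 2\mes{\Omega}\,\error(\Omega) \leq 1 + 2\mes{\Omega}\,\error(\Omega),
\end{align*}
since $A_\Omega - \mes{\Omega} = \ceil{\mes{\Omega}} - \mes{\Omega} \leq 1$. Dividing by $\mes{\Omega}$ produces the claimed inequality. The only nontrivial ingredient is Lemma \ref{lemma_tko}, which was already established; everything else is bookkeeping with the trace identity and the normalization of the $\eigf_k^\Omega$. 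I do not anticipate a real obstacle here — the main point to watch is that the opposite signs of the two sums in the decomposition are what allow the bound to be tight enough to match the stated right-hand side.
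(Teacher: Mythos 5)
Your proof is correct and follows essentially the same route as the paper: apply Lemma \ref{lemma_tko} to write the difference as $\sum_k (L_k-\lambdako)\abs{\eigf_k^\Omega}^2$ with $L_k=1_{\{k\le A_\Omega\}}$, take the $L^1$-triangle inequality using $\bignorm{\abs{\eigf_k^\Omega}^2}_1=1$, and finish with the trace identity $\sum_k\lambdako=\mes{\Omega}$ and $A_\Omega-\mes{\Omega}\le 1$. The only cosmetic difference is your closing remark about the opposite signs being what makes the bound tight; in fact the triangle inequality discards the signs entirely and the sharpness comes from the trace identity, but this does not affect the validity of the argument.
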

\begin{proof}
Let us set $L_k := 1$ for $1 \leq k \leq A_{\Omega}$ 
and $L_k := 0$ for $k>  A_{\Omega}$. Using Lemma \ref{lemma_tko} we have that
\begin{align} \label{help}
\inten_{\Omega}(z)- (1_{\Omega}*\env)(z)
= \sum_{k\geq 1} (L_k - \lambda^{\Omega}_k) \abs{\eigf^{\Omega}_k(z)}^2.
\end{align}
Since $\bignorm{\abs{\eigf^{\Omega}_k}^2}_1=
\bignorm{V_g \eigenf_k^\Omega}_{2}^2 =\norm{\eigenf^{\Omega}_k}_2^2=1$,
we use \eqref{eq_trace1} and estimate
\begin{align*}
&\bignorm{\inten_{\Omega}- (1_{\Omega}*\env)}_1
\leq \left( \sum_{k\geq 1} \abs{L_k - \lambda^{\Omega}_k}\right)
= \left(\sum_{k=1}^{A_{\Omega}} (1 - \lambda^{\Omega}_k) + \sum_{k>A_{\Omega}} \lambda^{\Omega}_k\right)
\\
&\qquad = \left(A_{\Omega} - 2 \sum_{k=1}^{A_{\Omega}} \lambda^{\Omega}_k + \sum_{k\geq 1}
\lambda^{\Omega}_k\right)
= \left(A_{\Omega} - 2 \sum_{k=1}^{A_{\Omega}} \lambda^{\Omega}_k + \mes{\Omega}\right)
\\
&\qquad =\left(
(A_{\Omega} - \mes{\Omega}) + 2 ( \mes{\Omega} - \sum_{k=1}^{A_{\Omega}} \lambda^{\Omega}_k)
\right)
\leq
\left(1 + 2 ( \mes{\Omega} - \sum_{k=1}^{A_{\Omega}} \lambda^{\Omega}_k)\right)
\\
&\qquad = 1+2\error(\Omega)\mes{\Omega}.
\end{align*}
\end{proof}
The tail of the eigenvalue distribution $E(\Omega )$ can be estimated
as follows.
\begin{lemma}
\label{lemma_error}
Let $\Omega \subset \Rtdst$ be a compact set and consider 
the number $\error(\Omega)$ defined in \eqref{eq_error}. Then the following holds.
\begin{itemize}
\item[(a)] $\error(R \cdot \Omega) \longrightarrow 0$, as $R \longrightarrow +\infty$.
\item[(b)] If $\Omega$ has finite perimeter and $g \in \modstarsp$, then
\begin{align}
\label{eq_error_estimate}
0 \leq \error(\Omega) \leq 2 \norm{g}_\modstar \sqrt{\frac{\perim{\Omega}}{\mes{\Omega}}}.
\end{align}
\end{itemize}
\end{lemma}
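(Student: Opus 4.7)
The plan starts from an algebraic simplification: by the trace identity $\trace(M_\Omega) = \sum_{k \geq 1} \lambdako = \mes{\Omega}$ from Lemma~\ref{lemma_toep},
\begin{align*}
\error(\Omega)\, \mes{\Omega} \;=\; \mes{\Omega} - \sum_{k=1}^{A_\Omega}\lambdako \;=\; \sum_{k > A_\Omega}\lambdako.
\end{align*}
In particular $\error(\Omega) \geq 0$, and since the right-hand side is bounded by the full trace $\mes{\Omega}$ we also obtain $\error(\Omega) \leq 1$ for free; I will use this as a fallback in the optimization step of (b). Both parts now reduce to controlling the tail sum.

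For part (a), I would feed Proposition~\ref{prop_eigen_soft} into this reformulation. Fix $\delta \in (0,1)$ and set $N_\delta(R) := \#\sett{k : \lambdakro > 1-\delta}$. Because the eigenvalues are listed non-increasingly, the first $\min(N_\delta(R),A_{R\cdot\Omega})$ of them all exceed $1-\delta$, so
\begin{align*}
\sum_{k=1}^{A_{R\cdot\Omega}} \lambdakro \;\geq\; (1-\delta)\min\bigl(N_\delta(R),\,A_{R\cdot\Omega}\bigr).
\end{align*}
Proposition~\ref{prop_eigen_soft} gives $N_\delta(R)/\mes{R\cdot\Omega} \to 1$, and trivially $A_{R\cdot\Omega}/\mes{R\cdot\Omega} \to 1$, so dividing by $\mes{R\cdot\Omega}$ yields $\liminf_{R\to\infty}(1-\error(R\cdot\Omega)) \geq 1-\delta$. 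Letting $\delta \to 0$ concludes.

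For part (b), I would introduce the auxiliary quantity
\begin{align*}
T(\Omega) \;:=\; \trace(M_\Omega - M_\Omega^2) \;=\; \sum_{k \geq 1}\lambdako(1-\lambdako) \;\geq\; 0,
\end{align*}
which controls simultaneously the spread of the eigenvalues and the count $N_\delta$. Combining the two trace formulas of Lemma~\ref{lemma_toep} with Lemma~\ref{lemma_var} applied to $f=1_\Omega$ and $\varphi=\env$ (using $\int \env = 1$ and $\int \abs{z}\env(z)\,dz = \norm{g}_\modstar^2$) produces the central estimate
\begin{align*}
T(\Omega) \;=\; \int_\Omega \bigl(1_\Omega(z)-(1_\Omega*\env)(z)\bigr)\,dz \;\leq\; \norm{1_\Omega*\env - 1_\Omega}_1 \;\leq\; \norm{g}_\modstar^2 \perim{\Omega}.
\end{align*}
Then for any $\delta \in (0,\tfrac{1}{2}]$ I would split the tail according to whether $\lambdako$ lies above or below $1-\delta$. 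Indices with $\lambdako \leq 1-\delta$ contribute at most $T(\Omega)/\delta$ via $\lambdako \leq \lambdako(1-\lambdako)/\delta$. Indices with $\lambdako > 1-\delta$ contribute at most $\max(N_\delta - A_\Omega,0) \leq N_\delta - \mes{\Omega}$, which is bounded by $T(\Omega)/\delta$ through Lemma~\ref{lemma_trace_norm} (the maximum there equals $1/\delta$ because $\delta \leq 1/2$). Adding the two pieces yields
\begin{align*}
\error(\Omega) \;\leq\; \frac{2\norm{g}_\modstar^2 \perim{\Omega}}{\delta\,\mes{\Omega}}.
\end{align*}

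The last step is to optimize in $\delta$. The natural choice $\delta := \norm{g}_\modstar \sqrt{\perim{\Omega}/\mes{\Omega}}$ delivers exactly $2\norm{g}_\modstar \sqrt{\perim{\Omega}/\mes{\Omega}}$ whenever it lies in $(0,\tfrac{1}{2}]$; if instead $\norm{g}_\modstar \sqrt{\perim{\Omega}/\mes{\Omega}} > \tfrac{1}{2}$, then the right-hand side of \eqref{eq_error_estimate} already exceeds $1 \geq \error(\Omega)$, so the bound holds trivially from the fallback noted at the start. The main obstacle I expect is precisely this admissibility case-split for $\delta$: everything else is an assembly of the trace identities, the BV regularization estimate of Lemma~\ref{lemma_var}, and the Markov-type eigenvalue count of Lemma~\ref{lemma_trace_norm}, combined so that the same quantity $T(\Omega)$ bounds both halves of the tail.
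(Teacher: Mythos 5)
Your proposal is correct, and part (a) is essentially the paper's argument verbatim: lower-bound $\sum_{k\le A_{R\Omega}}\lambdakro$ by $(1-\delta)\min(N_\delta,A_{R\Omega})$, invoke Proposition \ref{prop_eigen_soft}, and let $\delta\to 0$. Part (b), however, assembles the same ingredients in a genuinely different way. The paper lower-bounds the partial sum $\sum_{k=1}^{A_\Omega}\lambdako$ by $(1-\delta)\,l_\delta(\Omega)$ and feeds in Proposition \ref{prop_eigen}, which forces the intermediate bound $\error(\Omega)\le \delta+\tfrac1\delta\norm{g}_\modstar^2\perim{\Omega}/\mes{\Omega}$ with an irreducible additive $\delta$; you instead upper-bound the tail $\sum_{k>A_\Omega}\lambdako$ directly, splitting at the threshold $1-\delta$ and controlling both halves by the single Szeg\H{o}-type quantity $T(\Omega)=\trace(M_\Omega-M_\Omega^2)\le\norm{g}_\modstar^2\perim{\Omega}$ --- the sub-threshold eigenvalues by a Chebyshev bound $\lambdako\le\lambdako(1-\lambdako)/\delta$, the excess count by Lemma \ref{lemma_trace_norm}. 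This yields the purely multiplicative estimate $\error(\Omega)\le \tfrac{2}{\delta}\norm{g}_\modstar^2\perim{\Omega}/\mes{\Omega}$, which at $\delta=\norm{g}_\modstar\sqrt{\perim{\Omega}/\mes{\Omega}}$ reproduces \eqref{eq_error_estimate} exactly (with the same trivial-case escape when that $\delta$ exceeds $1/2$). Worth noting: since your bound is decreasing in $\delta$, taking $\delta=1/2$ gives $\error(\Omega)\le 4\norm{g}_\modstar^2\perim{\Omega}/\mes{\Omega}$, a bound \emph{linear} in the perimeter that is strictly sharper than \eqref{eq_error_estimate} in the relevant regime $\norm{g}_\modstar^2\perim{\Omega}/\mes{\Omega}\ll 1$ and matches the known lower bounds for the plunge region; the paper's route cannot produce this because of its additive $\delta$. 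One cosmetic slip: the inequality $\max(N_\delta-A_\Omega,0)\le N_\delta-\mes{\Omega}$ fails when $N_\delta<\mes{\Omega}$; write $\max(N_\delta-A_\Omega,0)\le\bigabs{N_\delta-\mes{\Omega}}$, which is what Lemma \ref{lemma_trace_norm} actually controls.
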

\begin{proof}
First note that from \eqref{eq_trace1} we know that
$\sum_k \lambdako = \mes{\Omega}$ and consequently $0 \leq \error(\Omega) \leq 1$.

Let $\delta \in (0,1)$ and define $l_\delta(\Omega):=\min\{A_\Omega, \#\sett{k:\lambdako > 1-\delta}\}$.
Then
\begin{align*}
\lambdako \geq 1-\delta
\mbox{ for }1 \leq k \leq l_\delta(\Omega). 
\end{align*}
Since $A_\Omega \geq l_\delta(\Omega)$ we estimate
\begin{align*}
\sum_{k=1}^{A_\Omega} \lambdako \geq \sum_{k=1}^{l_\delta(\Omega)} \lambdako \geq (1-\delta) l_\delta(\Omega).
\end{align*}
Therefore,
\begin{align*}
0 \leq \error(\Omega) &\leq 1-(1-\delta) \frac{l_\delta(\Omega)}{\mes{\Omega}}.
\end{align*}
Since $A_\Omega \geq \mes{\Omega}$ we get
\begin{align}
\label{eq_error_2}
0 \leq \error(\Omega) \leq 1-(1-\delta)
\min\sett{1, \frac{\#\sett{k:\lambdako > 1-\delta}}{\mes{\Omega}}}.
\end{align}
To prove (a), we apply this estimate and Proposition \ref{prop_eigen_soft} to $R \cdot \Omega$ to deduce that
\begin{align*}
0 \leq \limsup_{R \rightarrow +\infty} \error(R \cdot \Omega)
\leq 1-(1-\delta) = \delta,
\end{align*}
and then let $\delta \longrightarrow 0^+$.

To prove (b), we apply Proposition \ref{prop_eigen} and obtain
\begin{align*}
\frac{\#\sett{k:\lambdako > 1-\delta}}{\mes{\Omega}}
\geq 1 - C_\delta \norm{g}_{\modstar}^2 \frac{\perim{\Omega}}{\mes{\Omega}},
\end{align*}
where $C_\delta =\max\{\delta^{-1},(1-\delta)^{-1}\}$. Combining this estimate with \eqref{eq_error_2}
gives
\begin{align*}
\error(\Omega) &\leq 1-(1-\delta)
\left(1 - C_\delta \norm{g}_{\modstar}^2 \frac{\perim{\Omega}}{\mes{\Omega}}\right)
\\
&=\delta+ (1-\delta)C_\delta \norm{g}_{\modstar}^2 \frac{\perim{\Omega}}{\mes{\Omega}}.
\end{align*}
Since $\delta \in (0,1)$, we have that $(1-\delta)C_\delta \leq 1/\delta$ and therefore
\begin{align}
\label{eq_error_delta}
\error(\Omega) &\leq \delta+ \frac{1}{\delta} \norm{g}_{\modstar}^2 \frac{\perim{\Omega}}{\mes{\Omega}}.
\end{align}
Finally, let $\delta := \norm{g}_{\modstar}\sqrt{\frac{\perim{\Omega}}{\mes{\Omega}}}$. Note
that we can assume that $\delta < 1$ since otherwise the bound in 
\eqref{eq_error_estimate} is trivial because $\error(\Omega) \leq 1$.
Therefore, we can apply \eqref{eq_error_delta} to get the desired conclusion.
\end{proof}
Finally we derive a  weak-$L^2$ estimate for the error
$\inten_\Omega - 1_\Omega * ~\Theta$. 
\begin{prop}
\label{prop_weak_l2}
Let $g \in \modstarsp(\Rdst)$ and let $\Omega \subset \Rtdst$ be a compact set with finite perimeter
and assume that $\norm{g}^2_\modstarsp \perim{\Omega} \geq 1$. Then
\begin{align}
\label{eq_prop_weak_l2}
\Bigabs{\sett{z\in \mathbb{R}^{2d}: \bigabs{\inten_\Omega (z)
      -(1_\Omega*\env)(z) }>\delta}}
\lesssim \frac{1}{\delta^2}\norm{g}_{\modstar}^2 \perim{\Omega},
\qquad \delta>0.
\end{align}
\end{prop}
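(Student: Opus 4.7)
The plan is to split the error via a spectral truncation at $\lambda = 1/2$ and to estimate each piece in weak-$L^2$ through Chebyshev's inequality. Let $n := \#\sett{k : \lambdako > 1/2}$ and introduce the intermediate function
$$Q(z) := \sum_{k=1}^n \abs{\eigf^\Omega_k(z)}^2.$$
I would then write $\inten_\Omega - 1_\Omega * \env = (\inten_\Omega - Q) + (Q - 1_\Omega * \env)$ and bound each summand separately.

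For $Q - 1_\Omega * \env$, Lemma \ref{lemma_tko} gives $1_\Omega * \env = \sum_k \lambdako \abs{\eigf^\Omega_k}^2$, so
$$Q - 1_\Omega * \env = \sum_k c_k \abs{\eigf^\Omega_k}^2, \qquad c_k := 1_{\{\lambdako > 1/2\}} - \lambdako.$$
The key observation is the pointwise bound $c_k^2 \leq 2\lambdako(1 - \lambdako)$, regardless of which side of $1/2$ the eigenvalue lies on: if $\lambdako > 1/2$ then $c_k^2 = (1 - \lambdako)^2 \leq 1 - \lambdako \leq 2\lambdako(1-\lambdako)$, and the other case is symmetric. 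Summing and using Lemma \ref{lemma_toep} together with Lemma \ref{lemma_var} (exactly as in the proof of Proposition \ref{prop_eigen}) yields
$$\sum_k c_k^2 \leq 2\bigl[\trace(M_\Omega) - \trace(M_\Omega^2)\bigr] = 2\int_\Omega (1 - 1_\Omega * \env)\, dz \leq 2\norm{g}_\modstar^2 \perim{\Omega}.$$
A pointwise Cauchy--Schwarz estimate exploiting $\sum_k \abs{\eigf^\Omega_k(z)}^2 \leq 1$ from \eqref{eq_sum_one} gives $\abs{Q(z) - (1_\Omega * \env)(z)}^2 \leq \sum_k c_k^2 \abs{\eigf^\Omega_k(z)}^2$, so integration produces $\norm{Q - 1_\Omega * \env}_2^2 \leq 2\norm{g}_\modstar^2 \perim{\Omega}$, and Chebyshev bounds $\abs{\sett{\abs{Q - 1_\Omega * \env} > \delta/2}}$ by a constant multiple of $\norm{g}_\modstar^2 \perim{\Omega}/\delta^2$.

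For the remaining summand, monotonicity of the eigenvalues implies $\inten_\Omega - Q = \pm \sum_{k \in I} \abs{\eigf^\Omega_k}^2$, where $I$ is an interval of integers of length $\abs{A_\Omega - n}$. Proposition \ref{prop_eigen} with threshold $1/2$ gives $\abs{n - \mes{\Omega}} \leq 2\norm{g}_\modstar^2 \perim{\Omega}$, together with $\abs{A_\Omega - \mes{\Omega}} \leq 1$, so the standing hypothesis $\norm{g}_\modstar^2 \perim{\Omega} \geq 1$ forces $\abs{I} \lesssim \norm{g}_\modstar^2 \perim{\Omega}$. Since $0 \leq \abs{\inten_\Omega - Q} \leq 1$ pointwise by \eqref{eq_sum_one} and $\norm{\inten_\Omega - Q}_1 \leq \abs{I}$, one obtains $\norm{\inten_\Omega - Q}_2^2 \leq \abs{I} \lesssim \norm{g}_\modstar^2 \perim{\Omega}$, and Chebyshev furnishes the matching weak-$L^2$ bound. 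Combining both estimates via the inclusion $\sett{\abs{f + h} > \delta} \subseteq \sett{\abs{f} > \delta/2} \cup \sett{\abs{h} > \delta/2}$ yields \eqref{eq_prop_weak_l2}.

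The main obstacle is identifying the right intermediate cutoff. Applying Chebyshev directly to the expansion $\inten_\Omega - 1_\Omega * \env = \sum_k (L_k - \lambdako)\abs{\eigf^\Omega_k}^2$ (with $L_k = 1_{\{k \leq A_\Omega\}}$) only yields $\sum_k (L_k - \lambdako)^2 \lesssim 1 + \norm{g}_\modstar \sqrt{\perim{\Omega}\mes{\Omega}}$, essentially the quantity already controlled by Lemma \ref{lemma_bound_eig}, and this is strictly larger than the target $\perim{\Omega}$ when $\mes{\Omega}$ is large. Replacing the sharp cutoff $L_k$ by the spectral cutoff $1_{\{\lambdako > 1/2\}}$ exploits the symmetry $c_k^2 \leq 2\lambdako(1-\lambdako)$, converting the sum into the trace $\trace(M_\Omega - M_\Omega^2)$, which lives on the correct geometric scale $\perim{\Omega}$.
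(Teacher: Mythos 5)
Your proof is correct, and it takes a genuinely different route from the paper's. The paper works with $\delta$-dependent spectral cutoffs: it counts the eigenvalues above $1-\delta$ and above $\delta$, shows via Proposition \ref{prop_eigen} that the plunge region $\{k : \delta < \lambda_k^\Omega \leq 1-\delta\}$ has cardinality $\lesssim \delta^{-1}\norm{g}_\modstar^2\perim{\Omega}$, bounds the contribution of the eigenvalues outside that region pointwise by $2\delta$ using \eqref{eq_sum_one}, and applies an $L^1$-Chebyshev estimate only to the plunge-region sum. You instead fix the threshold at $1/2$ once and for all and control both pieces of your decomposition in the strong $L^2$ norm: the inequality $c_k^2 \leq 2\lambda_k^\Omega(1-\lambda_k^\Omega)$ (which does hold on both sides of $1/2$, as you verify) converts $\sum_k c_k^2$ into $2\,\trace(M_\Omega - M_\Omega^2)$, which Lemmas \ref{lemma_toep} and \ref{lemma_var} bound by $2\norm{g}_\modstar^2\perim{\Omega}$, and the pointwise Cauchy--Schwarz step together with \eqref{eq_sum_one} then gives $\norm{Q - 1_\Omega*\env}_2^2 \leq 2\norm{g}_\modstar^2\perim{\Omega}$; the remaining piece $\inten_\Omega - Q$ is handled by the eigenvalue count at threshold $1/2$ plus $L^1$--$L^\infty$ interpolation. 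The net effect is that you actually establish the stronger strong-$L^2$ estimate $\norm{\inten_\Omega - 1_\Omega*\env}_2^2 \lesssim \norm{g}_\modstar^2\perim{\Omega}$, of which \eqref{eq_prop_weak_l2} is an immediate Chebyshev corollary, whereas the paper's argument as written yields only the weak-type bound. Your closing diagnosis of why the naive cutoff $L_k = 1_{\{k\leq A_\Omega\}}$ fails --- it lands on the $\sqrt{\perim{\Omega}\mes{\Omega}}$ scale of Lemma \ref{lemma_bound_eig} rather than the scale $\perim{\Omega}$ --- is also accurate and identifies precisely why a spectral rather than an index cutoff is needed.
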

\begin{proof}
Let $\delta \in (0,1/2]$ and set
\begin{align*}
&a_\delta := \#\sett{k:\lambdako > 1-\delta},
\\
&b_\delta := \#\sett{k:\lambdako > \delta}.
\end{align*}
Since $\delta \leq 1/2$, $a_\delta \leq b_\delta$. From Proposition \ref{prop_eigen}
(using again that $\delta \leq 1/2$) we obtain
\begin{align*}
&a_\delta \geq \mes{\Omega} - \frac{1}{\delta} \norm{g}_{\modstar}^2 \perim{\Omega},
\\
&b_\delta \leq \mes{\Omega} + \frac{1}{\delta} \norm{g}_{\modstar}^2 \perim{\Omega}.
\end{align*}
Since $0\leq A_\Omega-\mes{\Omega} \leq 1$,
\begin{align*}
&a_\delta \geq A_\Omega - 1 - \frac{1}{\delta} \norm{g}_{\modstar}^2 \perim{\Omega},
\\
&b_\delta \leq A_\Omega + \frac{1}{\delta} \norm{g}_{\modstar}^2 \perim{\Omega}.
\end{align*}
Next  set $a'_\delta := \min\{a_\delta, A_\Omega\}$ and
$b'_\delta := \max\{b_\delta, A_\Omega\}$. Then the size of the plunge
region $\{ k \in \mathbb{N}: \delta < \lambda _k^\Omega \leq 1-\delta
\}$ is bounded by 
\begin{align*}
0 \leq b'_\delta - a'_\delta \leq \frac{2}{\delta} \norm{g}_{\modstar}^2 \perim{\Omega} + 1.
\end{align*}
Using the fact that $\norm{g}^2_\modstarsp \perim{\Omega} \geq 1$ we obtain
\begin{align}
\label{eq_bound_dis}
0 \leq b'_\delta - a'_\delta \lesssim \frac{1}{\delta} \norm{g}_{\modstar}^2 \perim{\Omega}.
\end{align}
Let us define $\sett{\mu_k: k \geq 1}$ by $\mu_k := 1-\lambdako$ for $k \leq A_\Omega$
and $\mu_k=\lambdako$ for $k>A_\Omega$. Then, by Lemma \ref{lemma_tko},
\begin{align*}
\abs{\inten_\Omega(z)-1_\Omega*\env(z)} \leq  \sum_{k\geq 1}
\mu_k  \abs{\eigf_k(z)}^2,
\qquad z \in \Rtdst.
\end{align*}
Since  $0 \leq \mu_k \leq 1$ and $0 \leq \mu_k \leq \delta$
if either $k \leq a'_\delta$ or $k > b'_\delta$,  \eqref{eq_sum_one}
implies that 
\begin{align*}
\bigabs{\inten_\Omega(z)-1_\Omega*\env(z)} &\leq
\sum_{k=1}^{a'_\delta} \mu_k \abs{\eigf_k(z)}^2
+
\sum_{k=a'_\delta+1}^{b'_\delta} \mu_k \abs{\eigf_k(z)}^2
+
\sum_{k>b'_\delta} \mu_k \abs{\eigf_k(z)}^2
\\
&\leq 2\delta + \sum_{k=a'_\delta+1}^{b'_\delta} \abs{\eigf_k(z)}^2.
\end{align*}
We set  $f_\delta := \sum_{k=a'_\delta+1}^{b'_\delta} \abs{\eigf_k(z)}^2$
and use \eqref{eq_bound_dis} to bound
\begin{align*}
&\bigabs{\sett{\abs{\inten_\Omega-1_\Omega*\env} \geq 3 \delta}}
\leq \bigabs{\sett{\abs{f_\delta} \geq \delta}}
\leq \frac{1}{\delta} \norm{f_\delta}_1
\\
&\qquad \lesssim \frac{1}{\delta^2}\norm{g}_{\modstar}^2 \perim{\Omega}.
\end{align*}
Making the change of variables $\delta \mapsto \delta/3$, we
obtain \eqref{eq_prop_weak_l2}
for $0 < \delta \leq 3/2$.
 Finally note that  \eqref{eq_prop_weak_l2} is trivial for $\delta >1$
 because  
\begin{align*}
\abs{\inten_\Omega(z)-1_\Omega*\env(z)} \leq \sum_{k\geq 1} \mu_k \abs{\eigf_k(z)}^2
\leq \sum_{k\geq 1} \abs{\eigf_k(z)}^2 \leq 1.
\end{align*}

\end{proof}
\section{Proof of the main results}
\label{sec_res}
We now combine the bounds from Section \ref{sec_bounds} and derive our
main  estimates on 
the accumulated spectrogram $\inten_\Omega$.
First we recall and prove Theorem \ref{th_non_asym}.
\begin{reptheorem}{th_non_asym}
Assume that $g \in \modstarsp(\Rdst)$ with $\norm{g}_2=1$
and that $\Omega \subset \Rtdst$ is a compact set with finite perimeter.
Then
\begin{align*}
\frac{1}{\mes{\Omega}} 
\bignorm{\inten_{\Omega}- 1_{\Omega}*\abs{V_g g}^2}_1 \leq 
\left(
\frac{1}{\mes{\Omega}}+4\norm{g}_\modstar \sqrt{\frac{\perim{\Omega}}{\mes{\Omega}}}
\right).
\end{align*}
\end{reptheorem}
\begin{proof}
The theorem follows immediately by combining Lemmas \ref{lemma_bound_eig} and \ref{lemma_error}.
\end{proof}

Theorem \ref{th_non_asym} provides an estimate for the accumulated spectrogram by the smoothed function
$1_\Omega * \env$. The following corollary estimates directly the error between $\inten_\Omega$ and $1_\Omega$.
\begin{coro}
\label{coro_error}
Let $g \in \modstarsp(\Rdst)$ and let $\Omega \subset \Rtdst$ be a compact set with finite perimeter.
Then 
\begin{align*}
\frac{1}{\mes{\Omega}} \norm{\inten_{\Omega}-1_{\Omega}} _1
\leq \frac{1}{\mes{\Omega}}+\norm{g}_\modstar^2 \frac{\perim{\Omega}}{\mes{\Omega}}+
4\norm{g}_\modstar \sqrt{\frac{\perim{\Omega}}{\mes{\Omega}}}.
\end{align*}
In particular, for $\mes{\Omega} \geq 1$
\begin{align}
\label{eq_coro_error}
\frac{1}{\mes{\Omega}} \norm{\inten_{\Omega}-1_{\Omega}} _1
\lesssim \sqrt{\frac{\perim{\Omega}}{\mes{\Omega}}},
\end{align}
where the implicit constant depends on the window $g$.
\end{coro}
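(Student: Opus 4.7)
The plan is to reduce \eqref{eq_coro_error} to Theorem \ref{th_non_asym} via the triangle inequality
\[
\bignorm{\inten_\Omega - 1_\Omega}_1 \leq \bignorm{\inten_\Omega - 1_\Omega * \env}_1 + \bignorm{1_\Omega * \env - 1_\Omega}_1.
\]
The first summand is controlled directly by Theorem \ref{th_non_asym}. To estimate the second, I would apply Lemma \ref{lemma_var} with $f = 1_\Omega$ and $\varphi = \env = \abs{V_g g}^2$. This is legitimate because $\int \env = \norm{V_g g}_2^2 = \norm{g}_2^4 = 1$, and because $\var(1_\Omega) = \perim{\Omega}$ by definition; moreover $\int \abs{z}\env(z)\,dz = \norm{g}_\modstar^2$. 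The lemma then yields
\[
\bignorm{1_\Omega * \env - 1_\Omega}_1 \leq \norm{g}_\modstar^2 \, \perim{\Omega}.
\]
Adding the two bounds and dividing by $\mes{\Omega}$ produces the first displayed inequality of the corollary.

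For the cleaner asymptotic bound \eqref{eq_coro_error} under $\mes{\Omega} \geq 1$, I would split according to the size of the ratio $\perim{\Omega}/\mes{\Omega}$. If $\perim{\Omega}/\mes{\Omega} \geq 1$, the inequality is essentially trivial: since $\norm{\inten_\Omega}_1 = A_\Omega \leq \mes{\Omega} + 1$ and $\norm{1_\Omega}_1 = \mes{\Omega}$, the triangle inequality gives $\bignorm{\inten_\Omega - 1_\Omega}_1 \leq 2\mes{\Omega}+1 \lesssim \mes{\Omega}\sqrt{\perim{\Omega}/\mes{\Omega}}$. If instead $\perim{\Omega}/\mes{\Omega} < 1$, then $\perim{\Omega}/\mes{\Omega} \leq \sqrt{\perim{\Omega}/\mes{\Omega}}$, which absorbs the middle term in the first displayed inequality. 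The remaining $1/\mes{\Omega}$ term is handled by the isoperimetric inequality $\perim{\Omega} \gtrsim \mes{\Omega}^{(2d-1)/(2d)}$: for $\mes{\Omega} \geq 1$ this forces $\mes{\Omega}\perim{\Omega} \gtrsim 1$, equivalently $1/\mes{\Omega} \lesssim \sqrt{\perim{\Omega}/\mes{\Omega}}$.

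Conceptually, this proof is pure bookkeeping and I do not foresee any real obstacle: the triangle inequality together with Theorem \ref{th_non_asym} and Lemma \ref{lemma_var} does essentially all the work. The only minor subtlety is that the dimensional constant from the isoperimetric inequality enters the implicit constant in \eqref{eq_coro_error} alongside the dependence on $g$; since $d$ is fixed throughout the paper, this is absorbed silently into the $\lesssim$ notation.
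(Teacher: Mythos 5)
Your proof is correct and follows essentially the same route as the paper: triangle inequality through $1_\Omega*\env$, then Theorem \ref{th_non_asym} plus Lemma \ref{lemma_var} for the first bound, and the reduction to $\perim{\Omega}/\mes{\Omega}\leq 1$ for the second. Your explicit appeal to the isoperimetric inequality to get $\mes{\Omega}\,\perim{\Omega}\gtrsim 1$ (so that $1/\mes{\Omega}\lesssim\sqrt{\perim{\Omega}/\mes{\Omega}}$) actually makes precise a step the paper passes over silently.
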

\begin{proof}
For the first part, we simply estimate
\begin{align*}
\bignorm{\inten_{\Omega}- 1_{\Omega}}_1
\leq \bignorm{\inten_{\Omega}- (1_{\Omega}*\env)}_1
+ \bignorm{(1_{\Omega}*\env)-1_{\Omega}}_1,
\end{align*}
and apply Theorem \ref{th_non_asym} and Lemma \ref{lemma_var}. For the second part, note that
$\norm{\inten_\Omega}_1 = A_\Omega = \mes{\Omega} + O(1)$ and consequently the left-hand side in
\eqref{eq_coro_error} is $\lesssim 1$. This allows us to assume that
$\frac{\perim{\Omega}}{\mes{\Omega}} \leq 1$. Consequently,
$\sqrt{\frac{\perim{\Omega}}{\mes{\Omega}}}$ dominates both
$\frac{\perim{\Omega}}{\mes{\Omega}}$ and $\frac{1}{\mes{\Omega}}$ and the conclusion follows.
\end{proof}
\begin{rem}
{\rm Since $\norm{\inten_{\Omega}-1_{\Omega}}_\infty \leq 2$, (complex)
interpolation and Corollary \ref{coro_error} imply  the following
$L^p$-estimate (when  $|\Omega | \geq
1$):}
\begin{align}
\label{eq_coro_lp}
\frac{1}{\mes{\Omega}} \bignorm{\inten_{\Omega}-1_{\Omega}}_p
\lesssim \frac{\perim{\Omega}^{1/(2p)}}{\mes{\Omega}^{1-1/(2p)}},
\qquad 1 \leq p \leq +\infty.
\end{align}
\end{rem}

We now prove Theorem \ref{th_one_point} that shows the
asymptotic convergence for the family of dilations of a single set.
\begin{reptheorem}{th_one_point}
Let $g \in L^2(\Rdst)$, $\norm{g}_2=1$, and let $\Omega \subset \Rtdst$ be compact. Then
\begin{align*}
\inten_{R\cdot\Omega} (R \cdot) \longrightarrow 1_\Omega \,\, 
\mbox{ in $L^1(\Rtdst)$, as }R\longrightarrow +\infty.
\end{align*} 
\end{reptheorem}
\begin{proof}
For $R>0$, let $\env_R(z) := R^{2d}\env(R z)$. Then
\begin{align*}
(1_{R\cdot\Omega}*\env)(R z)= (1_\Omega * \env_R)(z), \qquad z \in \Rtdst.
\end{align*}
Let us estimate
\begin{align*}
&\int_{\Rtdst} \abs{\inten_{R \cdot \Omega}(R z)- 1_{\Omega}(z)}dz
\\
&\qquad \leq
\int_{\Rtdst} \abs{\inten_{R \cdot \Omega}(R z)- (1_{\Omega}*\env_R)(z)}dz
+
\int_{\Rtdst} \abs{(1_{\Omega}*\env_R)(z)-1_\Omega(z)}dz
\\
&\qquad =
\int_{\Rtdst} \abs{\inten_{R \cdot \Omega}(R z)- (1_{R\cdot\Omega}*\env)(R z)}dz
+
\int_{\Rtdst} \abs{(1_{\Omega}*\env_R)(z)-1_\Omega(z)}dz
\\
&\qquad =
\frac{\mes{\Omega}}{\mes{R\cdot\Omega}}
\int_{\Rtdst} \abs{\inten_{R \cdot \Omega}(z)- (1_{R\cdot\Omega}*\env)(z)}dz
+
\int_{\Rtdst} \abs{(1_{\Omega}*\env_R)(z)-1_\Omega(z)}dz.
\end{align*}
Hence, applying Lemma \ref{lemma_bound_eig} to the first term we obtain
\begin{align*}
&\int_{\Rtdst} \abs{\inten_{R \cdot \Omega}(R z)- 1_{\Omega}(z)}dz
\\
&\qquad \leq \mes{\Omega} \left(\frac{1}{\mes{R\cdot\Omega}} +2\error(R\cdot\Omega) \right)+
\int_{\Rtdst} \abs{1_\Omega(z)- (1_{\Omega}*\env_R)(z)}dz.
\end{align*}
By Lemma \ref{lemma_error}, $\error(R\cdot\Omega) \longrightarrow 0$, 
as $R \longrightarrow +\infty$. In addition, since $\int \env =1$, 
$\env_R$ is an approximate identity in $L^1$, and
consequently $1_\Omega * \env_R \longrightarrow 1_\Omega$ in $L^1$ for
$R\to +\infty$. This completes the proof.
\end{proof}
Finally, we derive a weak-$L^2$ estimate, that, as opposed to Corollary \ref{coro_error},
provides an error bound that only depends on $\perim{\Omega}$.

\begin{reptheorem}{th_weak_l2}
Let $g \in \modstarsp(\Rdst)$ and let $\Omega \subset \Rtdst$ be a compact set with finite perimeter
and assume that $\norm{g}^2_\modstarsp \perim{\Omega} \geq 1$. Then
\begin{align*}
\Bigabs{\sett{ z\in \mathbb{R}^{2d}: \bigabs{\inten_\Omega(z)-1_\Omega(z)}>\delta}}
\lesssim \frac{1}{\delta^2}\norm{g}_{\modstar}^2 \perim{\Omega},
\qquad \delta>0.
\end{align*}
\end{reptheorem}
\begin{proof}
Let $\delta>0$. Since $\norm{\inten_\Omega-1_\Omega}_\infty \leq 2$, we assume without loss of generality
that $\delta \leq 2$.
Using Proposition \ref{prop_weak_l2} and Lemma \ref{lemma_var} 
we estimate
\begin{align*}
\Bigabs{\sett{\bigabs{\inten_\Omega-1_\Omega}>\delta}}
&\leq
\Bigabs{\sett{\bigabs{\inten_\Omega-1_\Omega*\env}>\delta/2}}
+
\Bigabs{\sett{\bigabs{1_\Omega*\env-1_\Omega}>\delta/2}}
\\
&\leq \Bigabs{\sett{\bigabs{\inten_\Omega-1_\Omega*\env}>\delta/2}}
+
\frac{2}{\delta}\bignorm{1_\Omega*\env-1_\Omega}_1
\\
&\lesssim \frac{1}{\delta^2}\norm{g}_{\modstar}^2 \perim{\Omega}
+\frac{1}{\delta}\norm{g}_{\modstar}^2 \perim{\Omega}.
\end{align*}
Since $\delta \leq 2$, $1/\delta \leq 2 /\delta^2$ and the conclusion follows.
\end{proof}

\section{Approximate retrieval of time-frequency filters}
\label{sec_app}
In signal processing, the time-frequency localization operators
$\locom$ are also called \emph{time-frequency filters}. Whereas the classical time-invariant filters multiply
the Fourier transform of a signal by a given symbol, time-frequency filters 
localize signals both in time and frequency and are therefore time-varying. The field of system identification
studies the possibility of retrieving a linear  operator from
its response to a set of  test signals. For time-varying systems the
identification problem is particularly difficult \cite{be69, ka62, pf08-1, pfwa06}.

In the case of a time-frequency filter, it is also important to understand to what extent the operator
$\locom$ can be understood from the measurement of a few of its eigenmodes (eigenfunctions)
$h^\Omega_1, \ldots, h^\Omega_n$. In
\cite{abdo12} the following special case was established (see \cite{abdo12} for a discussion on possible applications).
\begin{theo}
\label{th_abdo}
Let $g(t) := 2^{1/4} e^{-\pi t^2}$, $t \in \Rst$, be the one-dimensional Gaussian and let $\Omega \subseteq \Rst^2$
be compact and simply connected. If one of the eigenfunctions of
$\locom$ is a Hermite function, then $\Omega$ is a disk centered at
$0$. 
\end{theo}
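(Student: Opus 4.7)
The plan is to translate the eigenfunction hypothesis into sharp moment conditions on $\Omega$ and then exploit simple connectivity via a Cauchy-transform argument culminating in a maximum-principle reduction.

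First I would use the fact that $\{V_g h_k\}_{k \geq 1}$ is an orthonormal basis for the range of $V_g$ to rewrite the equation $H_\Omega h_n = \lambda h_n$ as $\langle 1_\Omega V_g h_n, V_g h_k \rangle = \lambda \delta_{kn}$ for every $k \geq 1$. Substituting the explicit formula $V_g h_{k+1}(z) = (\pi^k/k!)^{1/2}\, \bar z^k e^{-\pi|z|^2/2} e^{-\pi i x\xi}$ (with $z=x+i\xi$) derived in the Ginibre subsection, the cross-phases $e^{-\pi i x\xi}\cdot e^{\pi i x\xi}$ cancel and these relations reduce to the moment conditions
\[
\int_\Omega z^j \bar z^{n-1} e^{-\pi|z|^2}\,dA(z) = 0 \qquad \text{for all } j \neq n-1,\ j \geq 0.
\]

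Next I would introduce the two-dimensional Cauchy transform $H(\zeta) := \int_\Omega \bar z^{n-1} e^{-\pi|z|^2}/(\zeta-z)\,dA(z)$. Expanding $1/(\zeta-z)$ geometrically for $|\zeta|$ large and invoking the moment conditions, only the $j=n-1$ term survives, so $H(\zeta)=I/\zeta^n$ with $I:=\int_\Omega |z|^{2(n-1)} e^{-\pi|z|^2}\,dA > 0$. Because $\Omega$ is compact and simply connected, $\bC\setminus\Omega$ is connected, and analytic continuation extends $H(\zeta)=I/\zeta^n$ to all of $\bC\setminus\Omega$; continuity of the Cauchy transform of a bounded compactly supported density on $\bC$ then forces the pole at $0$ to lie inside, so $0\in\Omega$. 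In the interior one has $\bar\partial_\zeta H = \pi \bar\zeta^{n-1} e^{-\pi|\zeta|^2}$, and the ansatz $\psi(\zeta) := Q(|\zeta|^2)/\zeta^n$ with $Q(t):=\pi\int_0^t s^{n-1}e^{-\pi s}\,ds$ gives a particular solution smooth at the origin (since $Q(t)=\pi t^n/n+O(t^{n+1})$). Then $g := H-\psi$ is holomorphic on $\mathrm{int}(\Omega)$, continuous on $\overline\Omega$, and equating boundary values yields
\[
\zeta^n g(\zeta) = I - Q(|\zeta|^2) \qquad \text{on } \partial\Omega,
\]
whose right-hand side is real-valued.

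To finish, $\zeta^n g(\zeta)$ is holomorphic on the simply connected bounded domain $\mathrm{int}(\Omega)$ with continuous real boundary values, so its imaginary part is harmonic with zero boundary data and is therefore identically zero by the maximum principle; hence $\zeta^n g(\zeta)$ is constant. This forces $Q(|\zeta|^2)$ to be constant on $\partial\Omega$, and since $Q'(t) = \pi t^{n-1} e^{-\pi t} > 0$ for $t > 0$ makes $Q$ strictly increasing on $[0,\infty)$, $|\zeta|$ is constant on $\partial\Omega$. Together with simple connectivity and $0\in\Omega$, this shows $\Omega$ is a closed disk centered at the origin. The most delicate ingredient will be producing the explicit radial-over-monomial particular solution $\psi$: it is what converts the countable family of moment conditions into a single rigid geometric constraint on $\partial\Omega$, and without it one would be left with only weak information about the angular Fourier coefficients of $1_\Omega$ that is insufficient to force circular symmetry.
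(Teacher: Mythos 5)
First, a point of order: the paper does not prove Theorem \ref{th_abdo}; it quotes it from the reference [Abreu--D\"orfler, \emph{Inverse Problems} 2012], so there is no internal proof to compare against. Judged on its own terms, your argument is correct and is the standard quadrature-domain/Cauchy-transform proof of this kind of rigidity statement. The reduction of $\locom h_n=\lambda h_n$ to the moment conditions $\int_\Omega \bar z^{\,n-1}z^{j}e^{-\pi|z|^2}\,dA=0$ for $j\neq n-1$ is right, since $\{V_gh_k\}_k$ is an orthonormal basis of the range of $V_g$ and the phases cancel in $V_gh_n\overline{V_gh_k}$. The expansion at infinity giving $H(\zeta)=I/\zeta^n$ off $\Omega$, the continuity of the Cauchy transform of a bounded compactly supported density forcing $0\in\mathrm{int}\,\Omega$, the computation $\bar\partial_\zeta\psi=\pi\bar\zeta^{\,n-1}e^{-\pi|\zeta|^2}$ with $\psi$ smooth at $0$, and the maximum-principle step are all sound. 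The key idea --- converting infinitely many moment conditions into the single boundary identity $I-Q(|\zeta|^2)=\mathrm{const}$ and then using strict monotonicity of $Q$ --- is exactly what makes the proof work.

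Two caveats, both traceable to the vagueness of ``compact and simply connected'' in the statement rather than to your reasoning. (i) You need $\bC\setminus\Omega$ connected in order to propagate $H=I/\zeta^n$ to every boundary point; $\pi_1(\Omega)=0$ does not literally imply this for arbitrary compacta (Warsaw-circle-type examples), so ``simply connected'' must be read as ``connected with connected complement,'' which is the intended meaning. (ii) The maximum principle gives that $\zeta^n g(\zeta)$ is a real constant on each \emph{connected component} of $\mathrm{int}\,\Omega$, and the boundary identity is then exploited only on $\partial(\mathrm{int}\,\Omega)$, not a priori on all of $\partial\Omega$. To close this, note that a bounded domain whose boundary lies on a circle centered at $0$ must be the open disk bounded by that circle; two disjoint such disks cannot coexist, so $\mathrm{int}\,\Omega$ is a single disk $B_r$, and $\Omega\setminus\overline{B_r}$ has empty interior. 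The conclusion ``$\Omega$ is a disk'' then requires the (implicit, and unavoidable for any proof, since attaching a null set to a disk does not change $\locom$) assumption that $\Omega=\overline{\mathrm{int}\,\Omega}$. With these conventions made explicit, your proof is complete.
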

Hence, for a Gaussian window, $\Omega $ is completely determined by
the information that (a) $\Omega \subseteq \Rtdst$ is a simply
connected with given measure $|\Omega |$   and that (b)  one of the  eigenfunctions of
$\locom $  is a Hermite function. 

However, Theorem \ref{th_abdo} has some drawbacks. First, it is non-robust: from the information that the eigenmodes of
$\locom$ look approximately like Hermite functions we cannot conclude that $\Omega$ is approximately a disk. Second,
it only applies to the restricted situation of a one-dimensional Gaussian window. Both restrictions stem from the
one-variable complex analysis techniques used in \cite{abdo12}.

While the exact and robust recovery of the fine details of a time-frequency filter may not be possible
only from measurements of a few of its eigenmodes, we may  recover at least the coarse shape of the
set by means of the accumulated spectrogram.
Let us consider a time-frequency filter $\locom$ and suppose that we know
the measure of $\Omega$ and the \emph{spectrogram} of the first eigenmodes
$\abs{V_g h^\Omega_1}^2, \ldots, \abs{V_g h^\Omega_{A_\Omega}}^2$, $A_\Omega:=\ceil{\mes{\Omega}}$. Then we approximate
the (unknown) domain $\Omega $ by the level sets of the accumulated spectrogram $\rho_\Omega$.
\begin{theo}
\label{th_recovery}
Let $g \in \modstarsp(\Rdst)$ and let $\Omega \subset \Rtdst$ be a compact set with finite perimeter
and $\norm{g}^2_\modstarsp \perim{\Omega} \geq 1$. Let
\begin{align}
\label{eq_approx_omega}
\widetilde{\Omega} := \set{z \in \Rtdst}{\inten_{\Omega}(z) > 1/2}.
\end{align}
Then
\begin{align*}
\mes{\Omega \triangle \widetilde{\Omega}}
\lesssim \norm{g}^2_\modstar \perim{\Omega},
\end{align*}
where $\triangle$ denotes the symmetric difference of two sets. 
\end{theo}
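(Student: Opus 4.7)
The plan is to derive this recovery result as a direct consequence of the weak-$L^2$ estimate in Theorem \ref{th_weak_l2}. The key observation is geometric: the symmetric difference $\Omega \triangle \widetilde{\Omega}$ is precisely the set of points where $1_\Omega$ and the thresholded accumulated spectrogram disagree, and at any such point the difference $|\rho_\Omega - 1_\Omega|$ must exceed a fixed threshold (namely $1/2$).

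More concretely, I would first check the two inclusions separately. If $z \in \Omega \setminus \widetilde{\Omega}$, then $1_\Omega(z) = 1$ while $\rho_\Omega(z) \leq 1/2$ by the definition \eqref{eq_approx_omega}, so $|\rho_\Omega(z) - 1_\Omega(z)| \geq 1/2$. Conversely, if $z \in \widetilde{\Omega} \setminus \Omega$, then $1_\Omega(z) = 0$ while $\rho_\Omega(z) > 1/2$, so again $|\rho_\Omega(z) - 1_\Omega(z)| > 1/2$. Together these show the inclusion
\begin{equation*}
\Omega \triangle \widetilde{\Omega} \;\subseteq\; \bigl\{ z \in \Rtdst : \bigabs{\rho_\Omega(z) - 1_\Omega(z)} \geq 1/2 \bigr\}.
\end{equation*}

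With this inclusion in hand, I would simply apply Theorem \ref{th_weak_l2} with $\delta = 1/2$ (this is permissible under the standing assumption $\norm{g}^2_\modstar \perim{\Omega} \geq 1$) to bound the measure of the right-hand side by a constant multiple of $\norm{g}^2_\modstar \perim{\Omega}$, yielding the desired conclusion with an explicit implicit constant (essentially $4$ times the constant in Theorem \ref{th_weak_l2}).

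There is no real obstacle here; the work has all been done in proving the weak-$L^2$ bound. The only conceptual point worth emphasizing in the writeup is that the choice of threshold $1/2$ in \eqref{eq_approx_omega} is what makes the level-set reconstruction robust: any fixed threshold $\tau \in (0,1)$ would work, producing the analogous bound $|\Omega \triangle \widetilde\Omega| \lesssim \min(\tau, 1-\tau)^{-2} \norm{g}^2_\modstar \perim{\Omega}$, but $\tau = 1/2$ is the symmetric and optimal choice that simultaneously controls both types of errors (points of $\Omega$ missed by $\widetilde\Omega$ and spurious points in $\widetilde\Omega \setminus \Omega$).
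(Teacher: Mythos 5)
Your proposal is correct and follows essentially the same route as the paper: establish the inclusion $\Omega \triangle \widetilde{\Omega} \subseteq \{ z : |\rho_\Omega(z) - 1_\Omega(z)| \geq 1/2\}$ (the paper phrases this as the equality of $\Omega$ and $\widetilde\Omega$ outside that exceptional set) and then invoke Theorem \ref{th_weak_l2} to bound its measure. The only cosmetic point is that the weak-type bound is stated for the strict superlevel set $\{|\rho_\Omega - 1_\Omega| > \delta\}$, so one should take $\delta$ slightly below $1/2$, which only affects the implicit constant.
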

\begin{proof}
Let $E_\delta := \set{z \in \Rtdst}{\abs{1_\Omega(z)-\inten_{\Omega}(z)} \geq 1/2}$.
Let us note that
\begin{align}
\label{eq_inter}
\Omega \cap (\Rtdst \setminus E_\delta) = \widetilde{\Omega} \cap (\Rtdst \setminus E_\delta).
\end{align}
Indeed, if $z \in \Omega \cap (\Rtdst \setminus E_\delta)$, then
$\inten_\Omega(z) \geq 1_\Omega(z) - \abs{1_\Omega(z)-\inten_{\Omega}(z)} > 1-1/2=1/2$. Hence
$z \in \widetilde{\Omega}$. Second, if $z \in \widetilde{\Omega} \cap (\Rtdst \setminus E_\delta)$,
then $1_\Omega(z) \geq \inten_\Omega(z) - \abs{1_\Omega(z)-\inten_{\Omega}(z)} > 1/2 - 1/2 = 0$.\
Hence $z \in \Omega$.

The equality in \eqref{eq_inter} simply means that
$\Omega \triangle \widetilde{\Omega} \subseteq E_\delta$.
To finish the proof,
we apply Theorem \ref{th_weak_l2} to bound the measure of $E_\delta$:
\begin{align*}
\mes{E_\delta} \lesssim \norm{g}^2_\modstar\perim{\Omega}.
\end{align*}
\end{proof}
Finally we observe that the approximation $\widetilde{\Omega}$ does not require 
the phases of the short-time Fourier transforms
$V_g h^\Omega_1, \ldots, V_g h^\Omega_{A_\Omega}$ but only their absolute
values. This fact is very valuable in applications and is referred to as
phase retrieval (see for example \cite{babocaed09, mi14}). 

\section{Acknowledgment}
The authors thank  Hans Feichtinger for useful discussion and  for
suggesting the term accumulated spectrogram. 

\bibliographystyle{abbrv}

\end{document}